%
%
\documentclass[10pt]{article}
\usepackage{amsthm,amsmath,amsfonts,latexsym,amssymb,mathrsfs}

\theoremstyle{definition}
\newtheorem{Def}{Definition}[section]

\newtheorem{thm}[Def]{Theorem}
\newtheorem{cor}[Def]{Corollary}
\newtheorem{lem}[Def]{Lemma}

\newtheorem{rem}[Def]{Remark}

\newtheorem{con}[Def]{Conjecture}

\renewcommand{\thefootnote}{\fnsymbol{footnote}}

\setlength{\textwidth}{15 true cm}
\setlength{\oddsidemargin}{0.8 true cm}
\setlength{\textheight}{23 true cm}
\setlength{\topmargin}{-.5 true cm}
\setlength{\footskip}{1.0 true cm}

\begin{document}

\title{\bf Which subnormal Toeplitz operators \\ are either normal or analytic\,?}
\author{\Large Ra{\'u}l\ E.\ Curto, In Sung Hwang, and Woo Young Lee}

\date{}
\maketitle

\renewcommand{\thefootnote}{}
\footnote{\\
\textsl{MSC(2010)}: Primary 47B20, 47B35, 46J15; Secondary 30H10, 47A57\\
\smallskip
\textit{Keywords}: Block Toeplitz operators; subnormal; hyponormal;
bounded type functions\\
The work of the first named author was partially supported by NSF Grant
DMS-0801168. \ The work of the second named author was supported by NRF grant funded by
the Korea government(MEST)(2009-0075890). \ The work of the third
author was supported by the Basic Science Research Program through the
NRF grant funded by the Korea government(MEST)(2010-0001983).
}

\maketitle

\bigskip

\begin{abstract}
We study subnormal Toeplitz operators on the vector-valued Hardy space of the unit circle,
along with an appropriate reformulation of P.R. Halmos's Problem 5: Which subnormal
block Toeplitz operators are either normal or analytic\,? \ We extend and prove Abrahamse's Theorem
to the case of matrix-valued symbols; that is, we show that every subnormal block Toeplitz operator
with bounded type symbol (i.e., a quotient of two bounded analytic functions), whose analytic and
co-analytic parts have the ``left coprime factorization,'' is normal or analytic.
\ We also prove that the left coprime factorization
condition is essential. \ Finally, we examine a well known conjecture, of whether every submormal
Toeplitz operator with finite rank self-commutator is normal or analytic. \
\end{abstract}

%
%
\pagestyle{plain}

\vskip 1.5cm

\section{Introduction}

\noindent
Toeplitz operators arise naturally in several fields of mathematics and in a variety of problems in
physics (in particular, in the field of quantum mechanics). \ On the other hand, the theory
of subnormal operators is an extensive and highly developed area, which has made important contributions
to a number of problems in functional analysis, operator theory, and mathematical physics. \ Thus, it
becomes of central significance to describe in detail subnormality for Toeplitz operators. \ This paper
focuses on subnormality for \textit{block} Toeplitz operators and more precisely, the case of block Toeplitz
operators with bounded type symbols. \ Our main result is an appropriate generalization of Abrahamse's Theorem
to the case of matrix-valued symbols; that is, we show that every subnormal block Toeplitz operator
with bounded type symbol (i.e., a quotient of two bounded analytic functions),
whose analytic and co-analytic parts have the ``left coprime factorization,'' is normal or analytic.

Naturally, this research is closely related to the study of subnormal operators with finite rank self-commutator,
a class that has been extensively researched by many authors. \ However, until now a complete description of that
class has proved elusive. \ Recently, D. Yakubovich \cite{Ya} has shown that if $S$ is a pure subnormal operator
with finite rank self-commutator and admits a normal extension with no nonzero eigenvectors, then $S$ is
unitarily equivalent to a block Toeplitz operator with analytic rational normal matrix symbol. \ A corollary of our
main result illustrates, in a certain sense, the case of subnormal Toeplitz operators with finite rank self-commutator.

To describe our results in more detail, we first need to review a
few essential facts about (block) Toeplitz operators, and for that
we will use \cite{Do1}, \cite{Do2}, \cite{GGK}, and \cite{Ni}. \ Let
$\mathcal{H}$ be a complex Hilbert space and let $\mathcal{B(H)}$ be
the algebra of bounded linear operators acting on $\mathcal{H}$. \
An operator $T\in\mathcal{B(H)}$ is said to be {\it hyponormal} if
its self-commutator $[T^*,T]:= T^*T-TT^*$ is positive
(semi-definite), and {\it subnormal} if there exists a normal
operator $N$ on some Hilbert space $\mathcal{K}\supseteq
\mathcal{H}$ such that $\mathcal H$ is invariant under $N$ and
$N\vert_{\mathcal{H}}=T$. \ Let $\mathbb{T} \equiv
\partial\,\mathbb{D}$ be the unit circle in the complex plane. \ Let
$L^2\equiv L^2({\mathbb T})$ be the set of all square-integrable
measurable functions on $\mathbb{T}$ and let $H^2\equiv H^2({\mathbb
T})$ be the corresponding Hardy space. \ Let $H^\infty\equiv
H^\infty(\mathbb T):=L^\infty (\mathbb T)\cap H^2 (\mathbb T)$, that
is, $H^\infty$ is the set of bounded analytic functions on $\mathbb
D$. \ Given $\phi\in L^\infty$, the Toeplitz operator $T_\phi$ and
the Hankel operator $H_\phi$ are defined by
$$
T_\phi g:=P(\phi g) \quad\hbox{and}\quad H_\phi(g):=JP^\perp(\phi g)
\qquad (g\in H^2),
$$
where $P$ and $P^\perp$ denote the orthogonal projections that map from $L^2$ onto $H^2$ and $(H^2)^\perp$,
respectively, and where $J$ denotes the unitary operator on $L^2$ defined by $J(f)(z)=\overline z f(\overline z)$. \

In the early 1960's, normal Toeplitz operators were characterized by a property of their symbols by A. Brown and
P.R. Halmos \cite{BH}. \ On the other hand, the exact nature of the relationship between the symbol
$\phi\in L^\infty$ and the hyponormality of $T_\phi$ was understood much later, in 1988, via Cowen's theorem \cite{Co3}.
\bigskip

\noindent
{\bf Cowen's theorem.} (\cite{Co3}, \cite{NT}) {\it
For each $\phi\in L^\infty$, let
$$
\mathcal{E}(\phi)\equiv
\{k\in H^\infty:\ ||k||_\infty\le 1\ \hbox{and}\
\phi-k\overline\phi\in H^\infty\}.
$$
Then $T_\phi$ is hyponormal if and only if
$\mathcal{E}(\phi)$ is nonempty.
}
\bigskip

The elegant and useful theorem of C. Cowen has been used in the works \cite{CuL1}, \cite{CuL2}, \cite{FL},
\cite{Gu1}, \cite{Gu2}, \cite{GS}, \cite{HKL1}, \cite{HKL2}, \cite{HL1}, \cite{HL2}, \cite{HL3}, \cite{Le},
\cite{NT} and \cite{Zhu}, which have been devoted to the study of hyponormality for Toeplitz operators on $H^2$. \
When one studies hyponormality (also, normality and subnormality) of the Toeplitz operator $T_\phi$ one may,
without loss of generality, assume that $\phi(0)=0$; this is because hyponormality is invariant under translation
 by scalars. \ We now recall that a function $\phi\in L^\infty$ is said to be of {\it bounded type} (or
in the Nevanlinna class) if there are analytic functions $\psi_1,\psi_2\in H^\infty (\mathbb D)$ such that
$$
\phi(z)=\frac{\psi_1(z)}{\psi_2(z)}\quad\hbox{for almost all}\ z\in \mathbb{T}.
$$
It is well known \cite[Lemma 3]{Ab} that if $\phi\notin H^\infty$ then
\begin{equation}\label{1.1}
\hbox{$\phi$ is of bounded type}\ \Longleftrightarrow\ \hbox{ker}\, H_\phi\ne \{0\}\,.
\end{equation}
If $\phi\in L^\infty$, we write
$$
\phi_+\equiv P \phi\in H^2\quad\text{and}\quad \phi_-\equiv
\overline{P^\perp \phi}\in zH^2.
$$
Assume now that both $\phi$ and $\overline\phi$ are of bounded type. \ Since $T_{\overline z}H_\psi=H_\psi T_z$
for all $\psi \in L^{\infty}$, it follows from Beurling's Theorem that $\text{ker}\, H_{\overline{\phi_-}}=\theta_0 H^2$
and $\text{ker}\, H_{\overline{\phi_+}}=\theta_+ H^2$ for some inner functions $\theta_0, \theta_+$. \ We thus have
$b:={\overline{\phi_-}}\theta_0 \in H^2$, and hence we can write
$$
\phi_-=\theta_0\overline{b} \text{~and similarly~} \phi_+=\theta_+\overline{a} \text{~for some~} a \in H^2.
$$
In particular, if $T_\phi$ is hyponormal and $\phi\notin H^\infty$,
and since
$$
[T_\phi^*, T_\phi]=H_{\overline\phi}^* H_{\overline\phi}-H_\phi^* H_\phi=
H_{\overline{\phi_+}}^* H_{\overline{\phi_+}}-H_{\overline{\phi_-}}^* H_{\overline{\phi_-}},
$$
it follows that
$||H_{\overline{\phi_+}} f||\ge ||H_{\overline{\phi_-}} f||$
for all $f\in H^2$, and hence
\begin{equation*}
\theta_+ H^2= \text{ker}\, H_{\overline{\phi_+}}\subseteq
\text{ker}\, H_{\overline{\phi_-}}=\theta_0 H^2,
\end{equation*}
which implies that $\theta_0$ divides $\theta_+$, i.e.,
$\theta_+=\theta_0\theta_1$ for some inner function $\theta_1$.
We write, for an inner function $\theta$,
$$
\mathcal H_{\theta}:=H^2\ominus \theta\,H^2.
$$
Note that if $f=\theta \overline a \in L^2$, then
$f\in H^2$ if and only if $a\in \mathcal H_{z\theta}$;
in particular, if $f(0)=0$ then $a\in \mathcal H_\theta$.
Thus, if $\phi=\overline{\phi_-}+\phi_+\in L^\infty$ is
such that $\phi$ and $\overline\phi$ are of bounded type
such that $\phi_+(0)=0$ and $T_\phi$ is hyponormal, then we
can write
\begin{equation*}
\phi_+=\theta_0\theta_1\bar a\quad\text{and}\quad \phi_-=\theta_0
\bar b, \qquad\text{where $a\in \mathcal{H}_{\theta_0\theta_1}$ and
$b\in\mathcal{H}_{\theta_0}$.}
\end{equation*}
By Kronecker's Lemma
\cite[p. 183]{Ni}, if $f\in H^\infty$ then $\overline f$ is a
rational function if and only if $\hbox{rank}\, H_{\overline f}<\infty$,
which implies that
\begin{equation}\label{1.4}
\hbox{$\overline{f}$ is rational}
\ \Longleftrightarrow\
f=\theta\overline b\ \ \hbox{with a finite Blaschke product $\theta$}.
\end{equation}
On the other hand, M. Abrahamse \cite[Lemma 6]{Ab} also showed that if
$T_\phi$ is hyponormal, if $\phi\notin H^\infty$, and if $\phi$ or
$\overline{\phi}$ is of bounded type then both $\phi$ and
$\overline{\phi}$ are of bounded type.

\bigskip

We now introduce the notion of block Toeplitz operators. \ For a Banach space $\mathcal X$,
let $L^2_{\mathcal X}\equiv L^2_{\mathcal X}(\mathbb T)$ be the Hilbert space of
$\mathcal X$-valued norm square-integrable measurable
functions on $\mathbb{T}$ and let
$H^2_{\mathcal X}\equiv H^2_{\mathcal X}(\mathbb T)$ be the corresponding Hardy space.\
We observe that $L^2_{\mathbb{C}^n}= L^2\otimes \mathbb{C}^n$ and
$H^2_{\mathbb{C}^n}= H^2\otimes \mathbb{C}^n$.
If $\Phi$ is a matrix-valued function
in $L^\infty_{M_n}\equiv L^\infty_{M_n}(\mathbb T)$
($=L^\infty\otimes M_n$) then
$T_\Phi: H^2_{\mathbb{C}^n}\to H^2_{\mathbb{C}^n}$ denotes
the block Toeplitz operator with symbol $\Phi$
defined by
$$
T_\Phi F:=P_n(\Phi F)\quad \hbox{for}\ F\in H^2_{\mathbb{C}^n},
$$
where $P_n$ is the orthogonal projection
of $L^2_{\mathbb{C}^n}$ onto $H^2_{\mathbb{C}^n}$.\
A block Hankel operator with symbol
$\Phi\in L^\infty_{M_n}$ is the operator
$H_\Phi: H^2_{\mathbb{C}^n}\to H^2_{\mathbb{C}^n}$ defined by
$$
H_\Phi F := J_n P_n^\perp (\Phi F)\quad \hbox{for}\ F\in H^2_{\mathbb{C}^n},
$$
where $J_n$ denotes the unitary operator from
$\bigl(H^2_{\mathbb{C}^n}\bigr)^\perp$ to $H^2_{\mathbb{C}^n}$ given
by $J_n(F)(z):=\overline{z} I_n F(\overline{z})$ for $F \in
H^2_{\mathbb{C}^n}$, and where $I_n$ is the $n\times n$ identity matrix. \
If
we set $H^2_{\mathbb{C}^n}:=H^2\oplus\cdots\oplus H^2$ then we see
that
$$
T_\Phi=\begin{bmatrix} T_{\phi_{11}}&\hdots&T_{\phi_{1n}}\\
&\vdots\\
T_{\phi_{n1}}&\hdots&T_{\phi_{nn}}\end{bmatrix} \quad
\hbox{and}\quad
H_\Phi=\begin{bmatrix} H_{\phi_{11}}&\hdots&H_{\phi_{1n}}\\
&\vdots\\
H_{\phi_{n1}}&\hdots&H_{\phi_{nn}}\end{bmatrix},
$$
where
$$
\Phi=\begin{bmatrix} \phi_{11}&\hdots&\phi_{1n}\\
&\vdots\\
\phi_{n1}&\hdots&\phi_{nn}\end{bmatrix}\in
L^\infty_{M_n}.
$$
For $\Phi\in L^\infty_{M_n}$, write
$$
\widetilde\Phi (z):=\Phi^*(\overline z).
$$
A matrix-valued function
$\Theta\in H^\infty_{M_{n\times m}}$ ($=H^\infty\otimes M_{n\times m}$)
is called {\it inner} if
$\Theta(z)^*\Theta(z)=I_m$ for almost all $z\in\mathbb{T}$. \
The following basic
relations can be easily derived:
\begin{align}
&T_\Phi^*=T_{\Phi^*},\ \  H_\Phi^*= H_{\widetilde
\Phi}\quad (\Phi\in L^\infty_{M_n});\notag\\
&T_{\Phi\Psi}-T_\Phi T_\Psi = H_{\Phi^*}^*H_\Psi \quad (\Phi,\Psi\in L^\infty_{M_n});\label{1.5}\\
&H_\Phi T_\Psi = H_{\Phi\Psi},\ \ H_{\Psi\Phi}=T_{\widetilde{\Psi}}^*H_\Phi\quad (\Phi\in
L^\infty_{M_n}, \Psi\in H^\infty_{M_n});\label{1.5-1}\\
&H_\Phi^* H_\Phi - H_{\Theta \Phi}^* H_{\Theta\Phi} = H_\Phi^* H_{\Theta^*}H_{\Theta^*}^*H_\Phi \quad
(\Theta\in H^\infty_{M_n}\ \hbox{is inner},\ \Phi\in L^\infty_{M_n}).\notag
\end{align}

\noindent
For a matrix-valued function $\Phi=[\phi_{ij}]\in L^\infty_{M_n}$, we say that
$\Phi$ is of {\it bounded type} if each entry $\phi_{ij}$ is of
bounded type and that $\Phi$ is {\it rational}
if each entry $\phi_{ij}$ is a rational function.\
The {\it shift} operator $S$ on $H^2_{\mathbb C^n}$ is defined by
$$
S:=\sum_{j=1}^n \bigoplus T_z.
$$
The following fundamental result known as the Beurling-Lax-Halmos Theorem
is useful in the sequel.
\bigskip

\noindent
{\bf The Beurling-Lax-Halmos Theorem.}
{\it
A subspace $M$ of $H^2_{\mathbb C^n}$ is invariant under the shift
operator $S$ on $H^2_{\mathbb C^n}$ if and only if
$M=\Theta H^2_{\mathbb C^m}$, where
$\Theta$ is an inner matrix function in $H^{\infty}_{M_{n\times m}}$
($m\le n$).
}
\bigskip

\noindent
In view of (\ref{1.5-1}), the kernel of a block Hankel operator $H_\Phi$ is an invariant
subspace of the shift operator on $H^2_{\mathbb C^n}$.\
Thus if $\hbox{ker}\, H_\Phi\ne \{0\}$ then by the Beurling-Lax-Halmos Theorem,
$$
\hbox{ker}\, H_\Phi=\Theta H^2_{\mathbb{C}^m}
$$
for some inner matrix function $\Theta$. But we don't guarantee that
$\Theta$ is a square matrix. \
In fact, as we will refer in the sequel,
$\Theta$ is square if and only if $\Phi$ is of bounded type.\
Recently, Gu, Hendricks and Rutherford \cite{GHR} considered the
hyponormality of block Toeplitz operators and characterized the
hyponormality of block Toeplitz operators in terms of their symbols.\
In particular they showed that if $T_\Phi$ is a hyponormal block
Toeplitz operator on $H^2_{\mathbb{C}^n}$, then $\Phi$ is normal,
i.e., $\Phi^*\Phi=\Phi\Phi^*$.\
Their characterization for
hyponormality of block Toeplitz operators resembles the Cowen's
theorem except for an additional condition -- the normality
condition of the symbol.
\bigskip

\noindent
{\bf Hyponormality of Block Toeplitz Operators} (Gu-Hendricks-Rutherford \cite{GHR})
{\it
For each $\Phi\in L^\infty_{M_n}$, let
$$
\mathcal{E}(\Phi):=\Bigl\{K\in H^\infty_{M_n}:\
||K||_\infty \le 1\ \ \hbox{and}\ \ \Phi-K \Phi^*\in
H^\infty_{M_n}\Bigr\}.
$$
Then $T_\Phi$ is hyponormal if and
only if $\Phi$ is normal and $\mathcal{E}(\Phi)$ is nonempty.
}
\bigskip

For a matrix-valued function $\Phi\in H^2_{M_{n\times r}}$, we say
that $\Delta\in H^2_{M_{n\times m}}$ is a {\it left inner divisor}
of $\Phi$ if $\Delta$ is an inner matrix function such that
$\Phi=\Delta A$ for some $A \in H^{2}_{M_{m\times r}}$ ($m\le n$).\
We also say that two matrix functions $\Phi\in H^2_{M_{n\times r}}$
and $\Psi\in H^2_{M_{n\times m}}$ are {\it left coprime} if the only
common left inner divisor of both $\Phi$ and $\Psi$ is a unitary
constant and that $\Phi\in H^2_{M_{n\times r}}$ and $\Psi\in
H^2_{M_{m\times r}}$ are {\it right coprime} if $\widetilde\Phi$ and
 $\widetilde\Psi$ are left coprime.\
Two matrix functions $\Phi$ and $\Psi$ in $H^2_{M_n}$ are said to be
{\it coprime} if they are both left and right coprime.\
We remark
that if $\Phi\in H^2_{M_n}$ is such that $\hbox{det}\,\Phi$ is not
identically zero then any left inner divisor $\Delta$ of $\Phi$ is
square, i.e., $\Delta\in H^2_{M_n}$. If $\Phi\in H^2_{M_n}$ is such
that $\hbox{det}\,\Phi$ is not identically zero then we say that
$\Delta\in H^2_{M_{n}}$ is a {\it right inner divisor} of $\Phi$ if
$\widetilde{\Delta}$ is a left inner divisor of $\widetilde{\Phi}$.

\medskip

The following lemma will be useful in the sequel.

\smallskip

\noindent
\begin{lem}\label{lem1.1}(\cite{GHR})\
For $\Phi\in L^\infty_{M_n}$, the following statements are equivalent:
\medskip

{\rm (i)} $\Phi$ is of bounded type;

{\rm (ii)} $\hbox{ker}\, H_\Phi=\Theta H^2_{\mathbb{C}^n}$
for some square inner matrix function $\Theta$;

{\rm (iii)} $\Phi=A\Theta^*$, where $A\in H^{\infty}_{M_n}$ and $A$ and $\Theta$ are
right coprime.
\end{lem}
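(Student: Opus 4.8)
The plan is to prove the cyclic chain of implications (i) $\Rightarrow$ (ii) $\Rightarrow$ (iii) $\Rightarrow$ (i). Throughout I will use the elementary description $F\in\ker H_\Phi$ iff $\Phi F\in H^2_{\mathbb C^n}$, the scalar equivalence (\ref{1.1}), the Beurling-Lax-Halmos Theorem, and the fact that a square inner function $\Theta$ satisfies $\Theta\Theta^*=\Theta^*\Theta=I_n$ a.e.\ on $\mathbb T$ together with $\Theta^*=(\det\Theta)^{-1}\operatorname{adj}\Theta$, where $\operatorname{adj}\Theta\in H^\infty_{M_n}$ and $\det\Theta$ is a scalar inner function.

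For (i) $\Rightarrow$ (ii): since every entry $\phi_{ij}$ is of bounded type, (\ref{1.1}) gives $\ker H_{\phi_{ij}}=\theta_{ij}H^2$ for inner functions $\theta_{ij}$ (taking $\theta_{ij}=1$ when $\phi_{ij}\in H^\infty$). Put $\delta:=\prod_{i,j}\theta_{ij}$, a common inner multiple, so that $\delta H^2\subseteq\theta_{ij}H^2=\ker H_{\phi_{ij}}$ for all $i,j$. Then for $G\in H^2_{\mathbb C^n}$ each $\delta G_j\in\delta H^2$ gives $\phi_{ij}\,\delta G_j\in H^2$, whence $\Phi(\delta I_n)G\in H^2_{\mathbb C^n}$; that is, $\delta H^2_{\mathbb C^n}\subseteq\ker H_\Phi$, and in particular $\ker H_\Phi\ne\{0\}$. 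By Beurling-Lax-Halmos, $\ker H_\Phi=\Theta H^2_{\mathbb C^m}$ for some inner $\Theta\in H^\infty_{M_{n\times m}}$ with $m\le n$. To see that $\Theta$ is square I argue by ranks: every $F\in\Theta H^2_{\mathbb C^m}$ satisfies $F(z)\in\operatorname{ran}\Theta(z)$ a.e., while $\delta e_k\in\ker H_\Phi$ forces $\delta(z)e_k\in\operatorname{ran}\Theta(z)$, hence $e_k\in\operatorname{ran}\Theta(z)$ a.e.\ (as $\delta(z)\ne0$ a.e.). Thus $\operatorname{ran}\Theta(z)=\mathbb C^n$ a.e., so $m\ge n$ and therefore $m=n$.

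For (ii) $\Rightarrow$ (iii): set $A:=\Phi\Theta$. Testing $\Theta H^2_{\mathbb C^n}\subseteq\ker H_\Phi$ against constant vectors shows every column of $A$ lies in $H^2_{\mathbb C^n}$, and since $A\in L^\infty_{M_n}$ we obtain $A\in H^\infty_{M_n}$; moreover $\Theta\Theta^*=I_n$ gives $\Phi=A\Theta^*$. For right coprimeness, suppose $A$ and $\Theta$ shared a nonconstant common right inner divisor $\Omega$, so $A=A_1\Omega$ and $\Theta=\Theta_1\Omega$ with $\Theta_1$ inner. Then $\Phi=A\Theta^*=A_1\Omega\Omega^*\Theta_1^*=A_1\Theta_1^*$, so $\Phi\Theta_1=A_1\in H^\infty_{M_n}$ and hence $\Theta_1H^2_{\mathbb C^n}\subseteq\ker H_\Phi=\Theta H^2_{\mathbb C^n}$. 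But $\Theta=\Theta_1\Omega$ with $\Omega$ nonconstant forces $\Theta H^2_{\mathbb C^n}\subsetneq\Theta_1H^2_{\mathbb C^n}$, a contradiction; thus $A$ and $\Theta$ are right coprime. Finally, (iii) $\Rightarrow$ (i) is immediate: $\Phi=A\Theta^*=(\det\Theta)^{-1}A\,\operatorname{adj}\Theta$ exhibits each entry of $\Phi$ as a quotient of an $H^\infty$ function by the scalar analytic function $\det\Theta$, so $\Phi$ is of bounded type.

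I expect the main obstacle to be the square/full-rank step in (i) $\Rightarrow$ (ii): one must show that the Beurling-Lax-Halmos inner function attached to $\ker H_\Phi$ is square, which is precisely the matricial phenomenon absent in the scalar case and is handled here by the pointwise range argument. By contrast, the two coprimeness manipulations reduce to routine bookkeeping with inner divisors once the identities $\Theta\Theta^*=I_n$ and $\Theta^*=(\det\Theta)^{-1}\operatorname{adj}\Theta$ for square inner $\Theta$ are in hand.
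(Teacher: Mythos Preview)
The paper does not supply its own proof of this lemma; it is quoted from \cite{GHR}, so there is nothing to compare against directly. Your argument is correct and is essentially the standard one: produce a scalar inner multiple $\delta$ to force $\ker H_\Phi\supseteq\delta H^2_{\mathbb C^n}\ne\{0\}$, invoke Beurling--Lax--Halmos, and then use the pointwise range argument to upgrade the resulting $n\times m$ inner function to a square one; the passage (ii) $\Rightarrow$ (iii) via $A:=\Phi\Theta$ and the maximality of $\ker H_\Phi$ is exactly how right coprimeness is usually extracted, and (iii) $\Rightarrow$ (i) via Cramer's rule is the natural shortcut.

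One small wording point: where you write ``nonconstant'' for the hypothetical common right inner divisor $\Omega$, the precise condition (matching the paper's definition of coprimeness) is ``not a unitary constant''; this is what guarantees $\Omega H^2_{\mathbb C^n}\subsetneq H^2_{\mathbb C^n}$ and hence the strict containment you use. Also, in (ii) $\Rightarrow$ (iii) it is worth noting explicitly that $\Theta_1:=\Theta\Omega^*$ is again inner (immediate from $\Theta^*\Theta=\Omega^*\Omega=I_n$), since you use $\Theta_1 H^2_{\mathbb C^n}\subseteq\ker H_\Phi$. These are cosmetic; the logic is sound.
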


\medskip

\noindent
For $\Phi\in L^\infty_{M_n}$ we write
$$
\Phi_+:=P_n \Phi \in H^2_{M_n} \quad\hbox{and}\quad
\Phi_-:=\bigl(P_n^\perp \Phi\bigr)^* \in H^2_{M_n}.
$$
Thus we can write $\Phi=\Phi_-^*+\Phi_+\,$.
For an inner matrix function $\Theta\in H^\infty_{M_n}$,
write
$$
\mathcal{H}_{\Theta}:=\left(\Theta H^2_{\mathbb{C}^n}\right)^\perp
\equiv H^2_{\mathbb{C}^n}\ominus \Theta H^2_{\mathbb{C}^n}.
$$
Suppose $\Phi=[\phi_{ij}] \in L^\infty_{M_n}$ is such that $\Phi^*$ is of
bounded type.\
Then we may write
$\phi_{ij}=\theta_{ij}\overline{b}_{ij}$, where $\theta_{ij}$ is an
inner function and $\theta_{ij}$ and
$b_{ij}$ are coprime.\
Thus if $\theta$ is the least common multiple
of $\theta_{ij}$'s (i.e., the $\theta_{ij}$
divide $\theta$ and if they divide an inner function $\theta^\prime$
then $\theta$ in turn divides $\theta^\prime$),
then we can write
\begin{equation}\label{1.9}
\Phi=[\phi_{ij}]=[\theta_{ij}\overline{b}_{ij}]=[\theta
\overline{a}_{ij}]= \Theta A^* \quad (\Theta=\theta I_n,\ A \in H^{2}_{M_n}).
\end{equation}
We note that the representation (\ref{1.9}) is ``minimal," in the
sense that if $\omega I_n$ ($\omega$ is inner) is a common inner
divisor of $\Theta$ and $A$, then $\omega$ is constant. Let
$\Phi\equiv \Phi_-^*+\Phi_+\in L^\infty_{M_n}$ be such that $\Phi$
and $\Phi^*$ are of bounded type.\ Then in view of (\ref{1.9}) we
can write
$$
\Phi_+= \Theta_1 A^* \quad\hbox{and}\quad \Phi_-= \Theta_2 B^*,
$$
where $\Theta_i =\theta_i I_n$ with an inner function $\theta_i$ for $i=1,2$ and $A,B\in
H^{2}_{M_n}$.\
In particular, if $\Phi\in L^\infty_{M_n}$ is rational then the $\theta_i$ are chosen
as finite Blaschke products as we observed in (\ref{1.4}).
\bigskip

We would remark that, in (\ref{1.9}),
by contrast with scalar-valued functions, $\Theta$ and $A$ need not be (right) coprime:
indeed, if $\Phi:=\left[\begin{smallmatrix} z&z\\ z&z\end{smallmatrix}\right]$ then
we can write
$$
\Phi=\Theta A^* = \begin{bmatrix} z&0\\ 0&z\end{bmatrix}
\begin{bmatrix} 1&1\\ 1&1\end{bmatrix},
$$
but $\Theta:=\left[\begin{smallmatrix} z&0\\ 0&z\end{smallmatrix}\right]$
and $A:=\left[\begin{smallmatrix} 1&1\\ 1&1\end{smallmatrix}\right]$
are not right coprime because
$\frac{1}{\sqrt{2}} \left[\begin{smallmatrix} z&-z\\ 1&1\end{smallmatrix}\right]$
is a common right inner factor, i.e.,
\begin{equation}\label{1.10}
\Theta=\frac{1}{\sqrt{2}} \begin{bmatrix} 1&z\\ -1&z\end{bmatrix}
\,\cdot\, \frac{1}{\sqrt{2}} \begin{bmatrix} z&-z\\ 1&1\end{bmatrix}\quad\hbox{and}\quad
A=\sqrt{2} \begin{bmatrix} 0&1\\ 0&1\end{bmatrix}
\,\cdot\, \frac{1}{\sqrt{2}} \begin{bmatrix} z&-z\\ 1&1\end{bmatrix}.
\end{equation}

\bigskip

In this paper we consider the subnormality of
block Toeplitz operators and in particular, the block version of
Halmos's Problem 5: Which subnormal block Toeplitz operators are
either normal or analytic\,? \
In 1976, M. Abrahamse showed that
if $\phi=\overline g + f\in L^\infty$ ($f,g\in H^2$) is such that
$\phi$ or $\overline \phi$ is of bounded type, if $T_\phi$ is
hyponormal, and if $\hbox{\rm ker}\, [T_\phi^*, T_\phi]$ is
invariant under $T_\phi$ then $T_\phi$ is normal or analytic.\
The purpose of this paper is to establish an extension of Abrahamse's theorem
for block Toeplitz operators. \
In Section 2 we make a brief sketch on Halmos's Problem 5 and Abrahamse's theorem.\
Section 3 is devoted to the proof of the main result.\
In Section 4 we consider the scalar Toeplitz operators with finite
rank self-commutators.

\bigskip

\noindent
{\it Acknowledgment.}
The authors are deeply indebted to the referee for many
helpful comments that helped improved the presentation and mathematical content of the paper.

\vskip 1cm

%
%

\section{Halmos's Problem 5 and Abrahamse's theorem}

\noindent
In 1970, P.R. Halmos posed the following
problem, listed as Problem 5 in his lectures ``Ten
problems in Hilbert space" \cite{Hal1}, \cite{Hal2}:
$$
\hbox{Is every subnormal Toeplitz operator either normal or analytic\,?}
$$
A Toeplitz operator $T_\phi$ is called {\it analytic}
if $\phi\in H^\infty$. Any analytic Toeplitz
operator is easily seen to be subnormal: indeed,
$T_\phi h=P(\phi h)=\phi h =M_\phi h$ for $h\in H^2$,
where $M_\phi$ is the normal operator of multiplication by
$\phi$ on $L^2$.\
The question is natural because the two classes, the normal and
analytic Toeplitz operators, are fairly well understood and are
subnormal.\
Halmos's Problem 5 has been partially answered in the
affirmative by many authors (cf. \cite{Ab}, \cite{AIW},
\cite{CuL1}, \cite{CuL2}, \cite{NT}, and etc).\
In 1984, Halmos's
Problem 5 was answered in the negative by C. Cowen and J. Long
\cite{CoL}: they found an analytic function $\psi$ for which
$T_{\psi+\alpha\overline\psi}$ ($0<\alpha<1$) is subnormal
- in fact, this Toeplitz operator is unitarily equivalent to
a subnormal weighted shift $W_\beta$ with weight sequence
$\beta\equiv\{\beta_n\}$, where
$\beta_n=(1-\alpha^{2n+2})^{\frac{1}{2}}$ for $n=0,1,2,\hdots$.\
Unfortunately, Cowen and Long's construction does not provide an intrinsic
connection between subnormality and the theory of Toeplitz operators. Until now
researchers have been unable to characterize subnormal Toeplitz operators
in terms of their symbols.\
On the other hand, surprisingly, as C. Cowen notes
in \cite{Co1} and \cite{Co2}, some analytic Toeplitz operators
are unitarily equivalent to non-analytic Toeplitz operators;
i.e., the analyticity of Toeplitz operators is not invariant
under unitary equivalence.\
In this sense, we might ask whether
Cowen and Long's non-analytic subnormal Toeplitz operator is unitarily
equivalent to an analytic Toeplitz operator.\
To this end, we have:
\medskip

\noindent
{\bf Observation.}
{\it Cowen and Long's non-analytic subnormal Toeplitz operator $T_\phi$
is not unitarily equivalent to any analytic Toeplitz operator.}
\medskip

\noindent{\it Proof.}
Assume to the contrary that $T_\phi$ is unitarily equivalent to
an analytic Toeplitz operator $T_f$. \
Then by the above remark, $T_f$ is
unitarily equivalent to the subnormal weighted shift $W_\beta$ with
weight sequence $\beta\equiv\{\beta_n\}$, where
$\beta_n=(1-\alpha^{2n+2})^{\frac{1}{2}}$ ($0<\alpha<1$) for $n=0,1,2,\hdots$; i.e.,
there exists a unitary operator $V$ such that
$$
V^*T_f V=W_\beta.
$$
Thus if $\{e_n\}$ is the canonical orthonormal basis for $\ell^2$ then
$$
V^*T_f V e_j=W_\beta e_j =\beta_j e_{j+1}\quad\hbox{for}\ j=0,1,2,\hdots.
$$
We thus have
$$
\bigl(V^*T_{|f|^2}V\bigr) e_j =W_\beta^*W_\beta e_j = \beta_j^2 e_j,
$$
and hence,
$$
T_{|f|^2-\beta_j^2} (Ve_j)=0\quad\hbox{for}\ j=0,1,2,\hdots.
$$
Fix $j\ge 0$ and observe that $Ve_j\in \hbox{ker}\,(T_{|f|^2-\beta_j^2})$.
By Coburn's Theorem, if $|f|^2-\beta_j^2$ is nonzero then
either $T_{|f|^2-\beta_j^2}$ or $T^*_{|f|^2-\beta_j^2}$ is
one-one.\
It follows that
$|f|^2=\beta_j^2$ for $j=0,1,2,\hdots$.\
This readily implies that $\beta_0=\beta_1=\beta_2=\cdots$, a contradiction.
\hfill$\square$
\bigskip

Consequently, even if we interpret ``{\sl is}" in Halmos Problem 5
as ``{\sl is up to unitary equivalence}," the answer to Halmos Problem 5
is still negative.

\bigskip

We would like to reformulate Halmos's Problem 5 as follows:
\medskip

\noindent
{\bf Halmos's Problem 5 reformulated.} {\it
Which Toeplitz operators are subnormal\,?}

\bigskip

The most interesting partial answer to Halmos's Problem 5 was given by
M. Abrahamse \cite{Ab}. \
M. Abrahamse gave a general sufficient condition for
the answer to Halmos's Problem 5 to be affirmative.

\medskip

Abrahamse's theorem can be then stated as:
\bigskip

\noindent{\bf Abrahamse's Theorem} (\cite[Theorem]{Ab}).
{\it
Let $\phi=\overline{g}+f\in L^\infty$ ($f,g\in H^2$) be such that $\phi$ or
$\overline \phi$ is of bounded type.
If $T_\phi$ is hyponormal and $\hbox{\rm ker}\,[T_\phi^*, T_\phi]$
is invariant under $T_\phi$ then $T_\phi$ is normal or analytic.
}
\bigskip

Consequently, if $\phi=\overline{g}+f\in L^\infty$ ($f,g\in
H^2$) is such that $\phi$ or $\overline \phi$ is of bounded
type, then every subnormal Toeplitz operator must be normal or
analytic.

We say that a block Toeplitz operator $T_\Phi$ is {\it analytic} if
$\Phi\in H^\infty_{M_n}$.\
Evidently, any analytic block Toeplitz operator with a normal symbol
is subnormal because the multiplication operator $M_\Phi$ is
a normal extension of $T_\Phi$.\
As a first inquiry in the above reformulation of Halmos's Problem 5
the following question can be raised:
$$
\hbox{Is Abrahamse's Theorem valid for block Toeplitz operators}\,?
$$

In this paper we answer this question in the affirmative (Theorem \ref{thm3.7}).

\vskip 1cm

%
%

\section{Abrahamse's Theorem for matrix-valued symbols}

\noindent
Recall the representation (\ref{1.9}), and for $\Psi\in L^\infty_{M_n}$
such that $\Psi^*$ is of bounded type, write
$\Psi=\Theta_2B^*=B^*\Theta_2$. \
Let $\Omega$ be the greatest common left inner divisor of $B$
and $\Theta_2$. Then $B=\Omega B_{\ell}$ and $\Theta_2=\Omega
\Omega_2$ for some $B_\ell \in H^{2}_{M_n}$
and some inner matrix $\Omega_2$. \
Therefore we can write
$$
\Psi={B^*_{\ell}}\Omega_2,\quad\hbox{where $B_\ell$ and $\Omega_2$ are left coprime:}
$$
in this case, $B_\ell^*\Omega_2$ is called a {\it left coprime factorization} of $\Psi$.\
Similarly,
$$
\Psi=\Delta_2{B^*_{r}}, \quad\hbox{where $B_r$ and $\Delta_2$ are right coprime:}
$$
in this case, $\Delta_2B_r^*$ is called a {\it right coprime factorization} of $\Psi$.

\bigskip

To prove our main result (Theorem \ref{thm3.7}), we need several auxiliary lemmas.

We begin with:
\medskip

\noindent
\begin{lem}\label{lem3.1}
Suppose $\Phi = \Phi_-^*+ \Phi_+ \in L^{\infty}_{M_n}$ is
such that $\Phi$ and $\Phi^*$ are of bounded type of the form
$$
\Phi_+ =A^* \Theta_1 \quad \hbox{and} \quad \Phi_- = B^*\Theta_2\,,
$$
where $\Theta_i:=\theta_i I_n$  with an inner function $\theta_i \ (i=1,2)$.\
If $T_{\Phi}$ is hyponormal, then $\Theta_2$ is a right inner divisor of $\Theta_1$.
\end{lem}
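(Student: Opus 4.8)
The plan is to read off the required divisibility from the positivity of the self-commutator. First I would invoke the Gu--Hendricks--Rutherford characterization: since $T_\Phi$ is hyponormal, $\Phi$ is normal, so $T_{\Phi^*\Phi}=T_{\Phi\Phi^*}$, and the identity $T_{\Phi\Psi}-T_\Phi T_\Psi=H_{\Phi^*}^*H_\Psi$ from (\ref{1.5}) collapses the self-commutator to $[T_\Phi^*,T_\Phi]=H_{\Phi^*}^*H_{\Phi^*}-H_\Phi^*H_\Phi$. Because a block Hankel operator annihilates the analytic part of its symbol and $\Phi=\Phi_-^*+\Phi_+$, $\Phi^*=\Phi_-+\Phi_+^*$, this becomes $[T_\Phi^*,T_\Phi]=H_{\Phi_+^*}^*H_{\Phi_+^*}-H_{\Phi_-^*}^*H_{\Phi_-^*}\ge 0$. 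Hyponormality then yields $\|H_{\Phi_-^*}F\|\le\|H_{\Phi_+^*}F\|$ for every $F\in H^2_{\mathbb C^n}$, and hence the kernel inclusion $\ker H_{\Phi_+^*}\subseteq\ker H_{\Phi_-^*}$.

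Next I would feed the explicit factorizations into this inclusion. Since $\Theta_1=\theta_1 I_n$ is scalar, $\Phi_+^*=\Theta_1^*A=A\Theta_1^*$, so the intertwining relation (\ref{1.5-1}) gives $H_{\Phi_+^*}T_{\Theta_1}=H_{\Phi_+^*\Theta_1}=H_A=0$, whence $\Theta_1 H^2_{\mathbb C^n}\subseteq\ker H_{\Phi_+^*}$. Combined with the inclusion above this forces $\Theta_1 H^2_{\mathbb C^n}\subseteq\ker H_{\Phi_-^*}$, so $H_{\Phi_-^*}T_{\Theta_1}=H_{\Phi_-^*\Theta_1}=0$; that is, $\Phi_-^*\Theta_1=\theta_1\overline{\theta_2}\,B$ is analytic. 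Reading this entrywise, $\theta_2$ must divide $\theta_1\cdot(\text{inner part of }b_{ij})$ for each entry $b_{ij}$ of $B$.

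The remaining step, and the real obstacle, is to pass from these ``twisted'' divisibilities back to the clean statement $\theta_2\mid\theta_1$, i.e.\ that $\Theta_2$ right divides $\Theta_1$. The difficulty is precisely the phenomenon in (\ref{1.10}): the factors $\Theta_i$ and $A,B$ in (\ref{1.9}) need not be coprime, so one cannot simply identify $\ker H_{\Phi_+^*}$ with $\Theta_1 H^2_{\mathbb C^n}$, and divisibility of the \emph{scalar} inner functions $\theta_1,\theta_2$ does not transfer for free. Here I would exploit the minimality of the representation (\ref{1.9}) for $\Phi_-$: no nonconstant scalar inner function divides $\theta_2$ together with every entry of $B$, which says exactly that $\theta_2$ is coprime to $d:=\gcd_{ij}(\text{inner part of }b_{ij})$. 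Taking the greatest common inner divisor over all entries in the displayed divisibilities gives $\theta_2\mid\theta_1 d$, and since $\gcd(\theta_2,d)$ is constant, the lattice structure of scalar inner functions forces $\theta_2\mid\theta_1$. Consequently $\Theta_1=\Theta_2\cdot(\theta_1/\theta_2)I_n$ with $\theta_1/\theta_2$ inner, so $\Theta_2$ is a right inner divisor of $\Theta_1$, as claimed.
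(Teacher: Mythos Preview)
Your argument is correct and follows essentially the same route as the paper: both derive from hyponormality that $\theta_1\overline{\theta_2}\,B\in H^2_{M_n}$ and then extract $\theta_2\mid\theta_1$ from the minimality of the representation (\ref{1.9}) by an entrywise inner-function divisibility argument. The only differences are in packaging: the paper reaches $B\Theta_2^*\Theta_1\in H^2_{M_n}$ directly from the existence of $K\in\mathcal E(\Phi)$ (rather than via the kernel inclusion $\ker H_{\Phi_+^*}\subseteq\ker H_{\Phi_-^*}$), and then, instead of your gcd/Euclid step, it rewrites $b_{ji}=(\theta_2/\theta_{ij})c_{ij}$ using the original coprime data $f_{ij}=\theta_{ij}\overline{c}_{ij}$ to get $\theta_{ji}\mid\theta_1$ for each entry and hence $\theta_2=\mathrm{lcm}\,\theta_{ij}\mid\theta_1$.
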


\begin{proof}
Suppose $T_\Phi$ is hyponormal.\
Then there
exists a matrix function $K \in H^{\infty}_{M_n}$ such that
$\Phi_-^*-K \Phi_+^* \in H^{2}_{M_n}$.\
Thus $B\Theta_2^*-KA\Theta_1^*=F$ for some $F\in H^{2}_{M_n}$,
which implies that
$B\Theta_2^*\Theta_1\in H^2_{M_n}$.\
Now we write
$\Phi_-=\begin{bmatrix} f_{ij}\end{bmatrix}_{n\times n}$.\
Since $\Phi$ is of bounded type we can write
$f_{ij}=\theta_{ij}\overline{c}_{ij}$, where $\theta_{ij}$ is an
inner function, $c_{ij}$ is in $H^2$, and
$\theta_{ij}$ and $c_{ij}$ are coprime.\
Write $B=\begin{bmatrix} b_{ij}\end{bmatrix}_{n\times n}$.\
We thus have
$$
f_{ij}=\theta_{ij}\overline{c}_{ij}=\theta_2
\overline{b}_{ji}\quad\hbox{for each}\ i,j=1,\hdots, n,
$$
which implies that $b_{ji}=\overline{\theta}_{ij}\theta_2c_{ij}$.\
But since $B\Theta_2^*\Theta_1=[\theta_1
\overline{\theta}_2 b_{ij}] \in H^{2}_{M_n}$, we have
$\theta_1 \overline{\theta}_{ji} c_{ji} \in H^{2}$.\
Since
$\theta_{ji}$ and $c_{ji}$ are coprime for each
$i,j=1,\cdots ,n$, it follows that
$\overline{\theta}_{ji}\theta_1 \in H^{2}$,
which implies that $\overline{\theta}_2\theta_1\in H^{2}$
and therefore,
$\Theta_2$ divides $\Theta_1$, i.e.,
$\Theta_1=\Theta_0\Theta_2$ for some inner matrix function $\Theta_0$.
\end{proof}

\bigskip

In the sequel, when we consider the symbol
$\Phi=\Phi_-^* + \Phi_+\in L^{\infty}_{M_n}$, which is such that
$\Phi$ and $\Phi^*$ are of bounded type and for which
$T_\Phi$ is hyponormal,
we will, in view of Lemma 3.1, assume that
$$
\Phi_+=  A^* \Omega_1\Omega_2\quad \hbox{and} \quad \Phi_-=
B_\ell^*\Omega_2\ \hbox{(left coprime factorization)},
$$
where $\Omega_1 \Omega_2=\Theta=\theta I_n$.
We also note that $\Omega_2 \Omega_1=\Theta$:
indeed, if $\Omega_1\Omega_2=\Theta=\theta I_n$, then
$(\overline\theta I_n \Omega_1)\Omega_2=I_n$, so that
$\Omega_1(\overline\theta I_n \Omega_2)=I_n$, which implies that
$(\overline\theta I_n \Omega_2)\Omega_1=I_n$, and hence
$\Omega_2\Omega_1=\theta I_n=\Theta$.

\bigskip

\noindent
\begin{lem}\label{lem3.2}
Suppose
$\Phi = \Phi_-^*+ \Phi_+ \in L^{\infty}_{M_n}$ is such that $\Phi$
and $\Phi^*$ are of bounded type of the form
$$
\Phi_+ =\Delta_1 A_r^*\ \hbox{\rm (right coprime factorization)}
\quad \hbox{and} \quad \Phi_- = \Delta_2 B_r^*\ \hbox{\rm (right
coprime factorization)}.
$$
If $T_{\Phi}$ is hyponormal, then $\Delta_2$ is a left inner divisor
of $\Delta_1$, i.e., $\Delta_1=\Delta_2\Delta_0$ for some
$\Delta_0$.
\end{lem}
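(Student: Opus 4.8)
The plan is to avoid the direct (and, in the matrix setting, rather delicate) algebraic manipulation used in Lemma \ref{lem3.1}, and instead argue through the kernels of the associated block Hankel operators. First I would record the self-commutator identity. By the Gu--Hendricks--Rutherford theorem, hyponormality of $T_\Phi$ forces $\Phi$ to be normal, so the zeroth-order term $T_{\Phi^*\Phi-\Phi\Phi^*}$ vanishes and, using the relations \eqref{1.5},
\[
[T_\Phi^*,T_\Phi]=H_{\Phi^*}^*H_{\Phi^*}-H_\Phi^*H_\Phi\ge 0.
\]
Since analytic summands contribute nothing to a Hankel operator, $H_\Phi=H_{\Phi_-^*}$ and $H_{\Phi^*}=H_{\Phi_+^*}$. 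Pairing the displayed inequality with $F$ gives $\|H_{\Phi_-^*}F\|\le\|H_{\Phi_+^*}F\|$ for every $F\in H^2_{\mathbb{C}^n}$, and hence the kernel inclusion $\ker H_{\Phi_+^*}\subseteq \ker H_{\Phi_-^*}$.

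Next I would translate the two right coprime factorizations into kernel descriptions via Lemma \ref{lem1.1}. Taking conjugate transposes of the hypotheses gives $\Phi_+^*=A_r\Delta_1^*$ and $\Phi_-^*=B_r\Delta_2^*$; here $A_r,B_r\in H^\infty_{M_n}$ because $A_r^*=\Delta_1^*\Phi_+$ and $B_r^*=\Delta_2^*\Phi_-$ are bounded, and by definition of the factorizations $A_r,\Delta_1$ and $B_r,\Delta_2$ are right coprime. Both $\Phi_+^*$ and $\Phi_-^*$ are of bounded type (as $\Phi$ and $\Phi^*$ are), so the equivalence (ii)$\Leftrightarrow$(iii) in Lemma \ref{lem1.1} applies and yields
\[
\ker H_{\Phi_+^*}=\Delta_1 H^2_{\mathbb{C}^n},\qquad \ker H_{\Phi_-^*}=\Delta_2 H^2_{\mathbb{C}^n}.
\]

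Combining the inclusion of the first paragraph with these identifications gives $\Delta_1 H^2_{\mathbb{C}^n}\subseteq \Delta_2 H^2_{\mathbb{C}^n}$. Feeding the constant vectors through this containment shows that each column of $\Delta_1$ lies in $\Delta_2 H^2_{\mathbb{C}^n}$, so $\Delta_0:=\Delta_2^*\Delta_1\in H^2_{M_n}$ and $\Delta_1=\Delta_2\Delta_0$; since $\Delta_0$ is analytic with unitary boundary values ($\Delta_1,\Delta_2$ being inner), it is inner, and therefore $\Delta_2$ is a left inner divisor of $\Delta_1$, as required.

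I expect the main obstacle to be the temptation to mimic Lemma \ref{lem3.1} literally: multiplying the hyponormality relation $\Phi_-^*-K\Phi_+^*\in H^2_{M_n}$ on the right by $\Delta_1$ does give $B_r\Delta_2^*\Delta_1\in H^2_{M_n}$, but because $\Delta_1,\Delta_2$ are genuine matrix inner functions (not scalar multiples of $I_n$, as the $\Theta_i$ were in Lemma \ref{lem3.1}) one cannot strip off $B_r$ entrywise using scalar coprimeness. The kernel route circumvents this noncommutativity: Lemma \ref{lem1.1} packages the right coprimeness precisely as the Beurling--Lax--Halmos generator of $\ker H_{\Phi_\pm^*}$, so the divisibility drops out of an inclusion of shift-invariant subspaces rather than a factorization computation.
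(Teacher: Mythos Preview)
Your proof is correct and follows essentially the same route as the paper's: obtain the inclusion $\ker H_{\Phi_+^*}\subseteq\ker H_{\Phi_-^*}$, invoke Lemma~\ref{lem1.1} to identify these kernels as $\Delta_1 H^2_{\mathbb{C}^n}$ and $\Delta_2 H^2_{\mathbb{C}^n}$, and read off the left divisibility. The only cosmetic difference is in how the kernel inclusion is obtained---the paper uses the existence of $K\in\mathcal{E}(\Phi)$ to write $H_\Phi=T_{\widetilde K}^*H_{\Phi^*}$, whereas you use the self-commutator identity together with the normality of $\Phi$ from GHR---and the paper cites \cite{FF} for the final divisibility step where you give the short direct argument with $\Delta_0=\Delta_2^*\Delta_1$.
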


\begin{proof}
Suppose $T_{\Phi}$ is hyponormal. Then there exists $K\in
H^\infty_{M_n}$ such that $\Phi-K\Phi^*\in H^\infty_{M_n}$. Thus
$H_\Phi=H_{K\Phi^*}=T_{\widetilde K}^*H_{\Phi^*}$, which implies
that $\hbox{ker}H_{\Phi_+^*} \subseteq \hbox{ker}\, H_{\Phi_-^*}$,
so that by Lemma \ref{lem1.1},
$\Delta_1 H^2_{\mathbb C^n} \subseteq \Delta_2 H^2_{\mathbb
C^n}$. It follows (cf. [FF, Corollary IX.2.2]) that $\Delta_2$ is a
left inner divisor of $\Delta_1$.
\end{proof}

\bigskip

On the other hand, the condition ``(left/right) coprime factorization"
is not so easy to check in
general. For example, consider a simple case:
$\Phi_-:=\left[\begin{smallmatrix} z&z\\
z&z\end{smallmatrix}\right]$.
One is tempted to write
$$
\Phi_-:=\left[\begin{matrix} z&0\\ 0&z\end{matrix}\right]\,
\left[\begin{matrix} 1&1\\ 1&1\end{matrix}\right]^\ast.
$$
But $\left[\begin{smallmatrix} z&0\\ 0&z\end{smallmatrix}\right]$
and $\left[\begin{smallmatrix} 1&1\\ 1&1\end{smallmatrix}\right]$ are not
right coprime as we have seen in the Introduction.
On the other hand, observe that
$$
\left[\begin{matrix} 1&1\\ 1&1\end{matrix}\right] \equiv \Delta B^*,
$$
where
$$
\Delta:=\frac{1}{\sqrt{2}} \left[\begin{matrix} 1&z\\ -1&z\end{matrix}\right]\
\hbox{is inner and}\ B:=\frac{1}{\sqrt{2}}
\left[\begin{matrix} 0&2z\\ 0&2z\end{matrix}\right].
$$
Again, $\Delta$ and $B$ are not right coprime because
$\hbox{ker}\, H_{\left[\begin{smallmatrix} 1&1\\
1&1\end{smallmatrix}\right]}=H^2_{\mathbb C^2}$.
Thus we might choose
$$
\Phi_-=\bigl(zI_2\,\Delta\bigr)\cdot B^*\quad\hbox{or}\quad
\Phi_-=\Delta\cdot \bigl(\overline z I_2\, B\bigr)^*.
$$
A straightforward calculation show that $\hbox{ker}\, H_{\Phi_-^*}
=\Delta H^2_{\mathbb C^2}$.
Hence the latter of the above factorizations is the desired factorization:
i.e., $\Delta$ and $\overline z I_2\, B$ are right coprime.

\bigskip

However, if $\Theta$ is given in a form $\Theta=\theta I_n$
with a finite Blaschke product $\theta$,
then we can obtain a more tractable criterion on the coprime-ness of $\Theta$
and $B\in H^2_{M_n}$.
To see this, recall that an $n\times n$
matrix-valued function $D$ is called {\it a finite
Blaschke-Potapov product} if $D$ is of the form
\begin{equation*}
D(z)=\nu \prod_{m=1}^M \Bigl(b_m(z)P_m + (I-P_m)\Bigr),
\end{equation*}
where $\nu$ is an $n\times n$ unitary constant matrix, $b_m$ is a
{\it Blaschke factor}, which is of the form
$$
b_m(z):=\frac{z-\alpha_m}{1-\overline\alpha_m z}\quad
\hbox{($\alpha_m\in \mathbb D$)},
$$
and $P_m$ is an orthogonal projection in $\mathbb C^n$. In
particular, a scalar-valued function $D$ reduces to a finite
Blaschke product $D(z)=\nu \prod_{m=1}^M b_m(z)$, where
$\nu=e^{i\omega}$. It was known [Po] that an $n\times n$
matrix-valued function $D$ is rational and inner if and only if it
can be represented as a finite Blaschke-Potapov product.

\bigskip

We write $\mathcal{Z}(\theta)$ for the set of zeros of an inner function $\theta$.
We then have:
\medskip

\begin{lem}\label{lem3.12}
Let $B\in H^2_{M_n}$ and $\Theta:=\theta I_n$ with a
finite Blaschke product $\theta$. Then the following statements are
equivalent:
\medskip

{\rm (a)} $B(\alpha)$ is invertible for each $\alpha \in \mathcal{Z}(\theta)$;

{\rm (b)} $B$ and  $\Theta$ are right coprime;

{\rm (c)} $B$ and  $\Theta$ are left coprime.
\end{lem}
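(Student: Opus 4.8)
The plan is to reduce all three conditions to the single evaluation condition (a), exploiting the fact that, since $\Theta=\theta I_n$ is rational and inner, every inner divisor in sight is a finite Blaschke--Potapov product and can be analyzed pointwise at the zeros of $\theta$. First I would prove the equivalence (a)$\Leftrightarrow$(c), which concerns common \emph{left} inner divisors, and then deduce (a)$\Leftrightarrow$(b) by passing to the functions $\widetilde B$ and $\widetilde\Theta$, using that tilde interchanges left and right coprimeness.

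For (c)$\Rightarrow$(a) I argue contrapositively. Suppose $B$ and $\Theta$ are not left coprime, so they admit a common left inner divisor $\Delta$ that is not a unitary constant; write $B=\Delta B_1$ and $\theta I_n=\Delta\,\Theta_1$ with $B_1,\Theta_1\in H^2_{M_n}$. Since $\Delta$ is a left inner divisor of the rational inner function $\theta I_n$, it is itself rational inner, and taking determinants in $\theta I_n=\Delta\Theta_1$ gives $\theta^n=\det\Delta\cdot\det\Theta_1$, so $\det\Delta$ is a finite Blaschke product dividing $\theta^n$; as $\Delta$ is not a unitary constant, $\det\Delta$ is nonconstant and hence has a zero $\alpha$, which must lie in $\mathcal Z(\theta)$. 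At that point $\Delta(\alpha)$ is singular, so $B(\alpha)=\Delta(\alpha)B_1(\alpha)$ is singular as well, and (a) fails. For the reverse (a)$\Rightarrow$(c), again contrapositively, assume $B(\alpha)$ is not invertible for some $\alpha\in\mathcal Z(\theta)$ and build an explicit common left inner divisor. Choose a unit vector $w$ in the (nontrivial) kernel of $B(\alpha)^*$, let $P:=ww^*$ be the orthogonal projection onto $\mathbb C w$, and set
$$
\Delta(z):=b_\alpha(z)P+(I_n-P),\qquad b_\alpha(z)=\frac{z-\alpha}{1-\overline\alpha z}.
$$
Then $\Delta$ is a nonconstant inner matrix function, since $\Delta^*\Delta=|b_\alpha|^2P+(I_n-P)=I_n$ on $\mathbb T$. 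Writing $\theta=b_\alpha\theta'$ with $\theta'$ a finite Blaschke product, we have $\theta\overline{b_\alpha}=\theta'\in H^\infty$, so $\Delta^*\,\theta I_n=\theta'P+\theta(I_n-P)\in H^2_{M_n}$; and since $w^*B(\alpha)=0$ gives $PB(\alpha)=0$, the columns of $PB$ vanish at $\alpha$, whence $\overline{b_\alpha}\,PB\in H^2_{M_n}$ and $\Delta^*B=\overline{b_\alpha}\,PB+(I_n-P)B\in H^2_{M_n}$. Thus $\Delta$ is a common left inner divisor of $B$ and $\Theta$ that is not a unitary constant, so (c) fails. This establishes (a)$\Leftrightarrow$(c).

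Finally I would derive (a)$\Leftrightarrow$(b) from (a)$\Leftrightarrow$(c) by symmetry. Since $\widetilde\Theta=\widetilde\theta\,I_n$ with $\widetilde\theta$ a finite Blaschke product whose zero set is $\mathcal Z(\widetilde\theta)=\overline{\mathcal Z(\theta)}$, and since $\widetilde B(\overline\alpha)=B(\alpha)^*$, the equivalence (a)$\Leftrightarrow$(c) applied to the pair $(\widetilde B,\widetilde\Theta)$ reads: $\widetilde B$ and $\widetilde\Theta$ are left coprime iff $\widetilde B(\beta)$ is invertible for every $\beta\in\mathcal Z(\widetilde\theta)$, i.e. iff $B(\alpha)^*$ (equivalently $B(\alpha)$) is invertible for every $\alpha\in\mathcal Z(\theta)$. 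By definition $\widetilde B$ and $\widetilde\Theta$ are left coprime precisely when $B$ and $\Theta$ are right coprime, so this says exactly (b)$\Leftrightarrow$(a), completing the cycle.

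The main obstacle I anticipate is the structural step inside (c)$\Rightarrow$(a): justifying that a nonconstant left inner divisor of $\theta I_n$ is rational inner with a nonconstant scalar determinant that vanishes at some zero of $\theta$. This rests on the facts that inner divisors of rational inner functions are rational inner and that an inner matrix function with unimodular constant determinant is a constant unitary; both are standard (via the Blaschke--Potapov representation) but should be invoked with care. The remaining work---verifying the inner property and the two $H^2$-memberships in the construction---is routine once the projection $P$ is chosen to annihilate $B(\alpha)$ from the left.
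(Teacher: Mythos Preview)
Your argument is correct, but there is a small bookkeeping slip: the paragraph you label ``(c)$\Rightarrow$(a)'' actually proves the contrapositive of (a)$\Rightarrow$(c) (you assume $\neg$(c) and derive $\neg$(a)), and conversely the paragraph labeled ``(a)$\Rightarrow$(c)'' proves (c)$\Rightarrow$(a). Swap the labels and everything is in order; the mathematics itself is sound, including the point you flag as the main obstacle (if a square inner $\Delta$ left-divides $\theta I_n$ then $\det\Delta$ is an inner divisor of $\theta^n$, and a nonconstant inner matrix cannot have constant determinant).

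Your route differs substantially from the paper's. The paper first establishes (a)$\Leftrightarrow$(b): for $\neg$(a)$\Rightarrow\neg$(b) it builds, via Hermite--Fej\'er interpolation, a polynomial vector lying in $\ker H_{B\Theta^*}\setminus\Theta H^2_{\mathbb C^n}$, invoking the Hankel-kernel characterization of right coprimeness (Lemma~\ref{lem1.1}); it then proves (b)$\Leftrightarrow$(c) by matching Taylor coefficients at each zero of $\theta$ through a lower-triangular block system. Your approach replaces the interpolation step with an explicit one-factor Blaschke--Potapov divisor $b_\alpha P+(I-P)$, checking membership in $H^2_{M_n}$ by hand, and replaces the Taylor-block computation entirely by the tilde symmetry $\widetilde B(\overline\alpha)=B(\alpha)^*$. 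This is shorter and more self-contained; the paper's argument, on the other hand, ties the lemma directly to the Hankel-operator framework used elsewhere and makes the role of zero multiplicities explicit.
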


\begin{proof}
We first write
$$
\theta(z) = e^{i \xi} \prod_{i=1}^N \Bigl(\frac{z- \alpha_i}{1-
\overline{\alpha_i}z} \Bigr)^{m_i}\qquad (\sum_{i=1}^{N} m_i=:d).
$$
\medskip

\noindent (a) $\Leftrightarrow$ (b): Suppose $B(\alpha)$ is
invertible for each $\alpha \in \mathcal Z(\theta)$.
Assume to the contrary that $B$ and $\Theta$ are not right
coprime. Thus there exists a finite Blaschke-Potapov product $D$ of
the form
$$
D(z)=\nu \prod_{m=1}^M \Bigl(b_m(z)P_m + (I-P_m)\Bigr)
$$
satisfying that
$$
B=B_1 D\quad\hbox{and}\quad \Theta=\Theta_0 D\quad\hbox{for some
inner function}\ \Theta_0.
$$
Thus if $\alpha \in \mathcal Z(b_{m_0})$ for some $1\le m_0\le M$,
then $\Theta(\alpha)=\Theta_0(\alpha)D(\alpha)$ is not invertible.
But since $\Theta=\theta I_n$, it follows that $\Theta (\alpha)=0$
and hence $\alpha \in \mathcal Z(\theta)$. Moreover,
$$
\begin{aligned}
\hbox{det}\,B(\alpha)=
\hbox{det}\,B_1(\alpha)\,\hbox{det}\,D(\alpha) &=\hbox{det}(\nu)\,
\hbox{det}\,B_1(\alpha) \prod_{m=1}^M \hbox{det}\,\bigl(b_m(\alpha)
P_m+(I-P_m)\bigr)=0,
\end{aligned}
$$
giving a contradiction. Therefore $B$ and $\Theta$ are right
coprime.

For the converse we assume that $B(\alpha_{i_0})$ is not invertible
for some $i_0$. Then the following matrix is not invertible:
$$
\mathcal B:=\begin{bmatrix} B_{0}&0&0&0&\cdots&0\\
B_{1}&B_{0}&0&0&\cdots&0\\
B_{2}&B_{1}&B_{0}&0&\cdots&0\\
\vdots&\ddots&\ddots&\ddots&\ddots&\vdots \\
B_{m_{i_0} -2}&B_{m_{i_0} -3}&\ddots&\ddots&B_{0}&0\\
B_{m_{i_0} -1}&B_{m_{i_0} -2}&\hdots&B_{2}&B_{1}&B_{0}
\end{bmatrix}
\quad \Bigl(B_j:=\frac{B^{(j)}(\alpha_{i_0})}{j!} \Bigr).
$$
Thus there exists a nonzero $n\times m_{i_0}$ matrix $\mathcal
G=\begin{pmatrix} \mathcal G_0 \ \mathcal G_1 \ \cdots \ \mathcal
G_{m_{i_0}-1} \end{pmatrix}^t$ such that $\mathcal B \mathcal G =0.$
We now want to show that there exists $\mathfrak{h}=\begin{pmatrix}
h_1 \ h_2 \ \cdots \ h_n
\end{pmatrix}^t \in H^2_{\mathbb{C}^n}$ satisfying the following
property:
\begin{equation}\label{3-18-1}
\frac{\mathfrak{h}^{(j)}(\alpha_i)}{j!}=\begin{cases} \mathcal G_j \quad &(i = i_0)\\
0 \ &(i \neq i_0) \end{cases}.
\end{equation}
This is exactly the classical Hermite-Fej\' er interpolation problem
(cf. [FF]), so that we use an argument of a solution for the
interpolation of this type. Thus we can construct a function (in
fact, a polynomial) $\mathfrak{h}(z)\equiv P(z)$ satisfying
(\ref{3-18-1}) (see [FF, p.299]). Then $P(z)$ belongs to
$\text{ker}H_{B \Theta^*}$. Since
$$
\mathcal G=\begin{bmatrix} \mathcal G_0 \ \mathcal G_1 \ \cdots \
\mathcal G_{m_{i_0}-1}
\end{bmatrix}^t \neq 0,
$$
it follows that $P(z) \notin \Theta H^2_{\mathbb{C}^n}$. Therefore
we have $\hbox{\rm ker}\,H_{B \Theta^*} \neq \Theta
H^2_{\mathbb{C}^n}$, which implies that $B$ and $\Theta$ are not
right coprime.

\medskip

\noindent (b) $\Leftrightarrow$ (c): Suppose  $B$ and $\Theta$ are
right coprime. If $B$ and  $\Theta$ are not left coprime, there
exists a nonconstant inner matrix $\Delta \in H^2_{M_n}$ such that
$B=\Delta B_1$ and $\Theta=\Delta \Omega.$
We thus have that for each $i=1,2,\cdots,N$
$$
\begin{bmatrix} \Delta_{i,0}&0&0&0&\cdots&0\\
\Delta_{i,1}&\Delta_{i,0}&0&0&\cdots&0\\
\Delta_{i,2}&\Delta_{i,1}&\Delta_{i,0}&0&\cdots&0\\
\vdots&\ddots&\ddots&\ddots&\ddots&\vdots \\
\Delta_{i,m_i -2}&\Delta_{i,m_i -3}&\ddots&\ddots&\Delta_{i,0}&0\\
\Delta_{i,m_i -1}&\Delta_{i,m_i
-2}&\hdots&\Delta_{i,2}&\Delta_{i,1}&\Delta_{i,0}
\end{bmatrix} \begin{bmatrix}
\Omega_{i,0}\\
\Omega_{i,1}\\
\Omega_{i,2}\\
\vdots\\
\Omega_{i,m_i -2}\\
\Omega_{i,m_i -1}
\end{bmatrix} =0,
$$
where
$$
\Delta_{i,j}:= \frac{\Delta^{(j)}(\alpha_i)}{j!}\quad\hbox{and}\quad
\Omega_{i,j}:= \frac{\Omega^{(j)}(\alpha_i)}{j!}.
$$
But since $B(\alpha)$ is invertible for each $\alpha\in
\mathcal{Z}(\theta),$ we have that $\Delta_{i,0}$ is invertible for
each $i=1,2,\cdots,N$. Thus
$$
\Omega_{i,j}=0 \quad (i=1,2,\cdots,N, \ j=0,1,2,\cdots, m_{i}-1),
$$
which implies that $\Omega = \Theta \Omega_1$ for some $\Omega_1 \in
H^2_{M_n}$. Thus $\Theta = \Delta \Omega=\Delta \Theta \Omega_1$, so
that $I=\Delta \Omega_1$ and hence $\Delta^*=\Omega_1$, which
implies that $\Delta$ is a constant matrix, a contradiction. Thus
$B$ and  $\Theta$ are left coprime. The converse follows from the
same argument. This completes the proof.
\end{proof}


\begin{lem}\label{lem3.6}
Let $\theta_0$ be a nonconstant inner function.
Then $\mathcal{H}_{\theta_0}$ contains an outer function that is invertible in $H^\infty$.
\end{lem}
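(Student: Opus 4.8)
The plan is to write down an explicit function and verify it works, rather than argue abstractly. The natural candidate is the reproducing kernel of the model space $\mathcal{H}_{\theta_0}=H^2\ominus\theta_0H^2$ at the origin, namely
\[
f(z):=1-\overline{\theta_0(0)}\,\theta_0(z).
\]
First I would check that $f\in\mathcal{H}_{\theta_0}$. Since $\theta_0\in H^\infty$ we clearly have $f\in H^2$, so it suffices to show $f\perp\theta_0H^2$. For arbitrary $h\in H^2$, using that multiplication by the inner function $\theta_0$ is isometric on $H^2$ and that $\langle 1,g\rangle=\overline{g(0)}$ for $g\in H^2$, I would compute
\[
\langle f,\theta_0h\rangle=\langle 1,\theta_0h\rangle-\overline{\theta_0(0)}\,\langle\theta_0,\theta_0h\rangle=\overline{\theta_0(0)}\,\overline{h(0)}-\overline{\theta_0(0)}\,\overline{h(0)}=0,
\]
which gives $f\in\mathcal{H}_{\theta_0}$.

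The second step is to show that $f$ is invertible in $H^\infty$. Because $\theta_0$ is a \emph{nonconstant} inner function, the maximum modulus principle forces $|\theta_0(0)|<1$ (equality would make $\theta_0$ a unimodular constant). Since $|\theta_0(z)|\le 1$ throughout $\mathbb{D}$, it follows that $|\overline{\theta_0(0)}\,\theta_0(z)|\le|\theta_0(0)|<1$ on $\mathbb{D}$, and hence
\[
|f(z)|\ge 1-|\theta_0(0)|>0\qquad(z\in\mathbb{D}).
\]
Thus $f\in H^\infty$ is bounded away from zero, so $1/f$ is analytic and bounded; that is, $f$ is invertible in $H^\infty$. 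Finally, any function invertible in $H^\infty$ is outer (its inner factor would itself have to be invertible in $H^\infty$, hence a unimodular constant), so $f$ is the desired outer, $H^\infty$-invertible member of $\mathcal{H}_{\theta_0}$.

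There is no serious obstacle here once the right candidate is identified; the whole difficulty lies in guessing the function $1-\overline{\theta_0(0)}\,\theta_0$ (equivalently, recalling the reproducing kernel of $\mathcal{H}_{\theta_0}$ at $0$). The one point that genuinely uses the hypothesis, and that I would be careful to flag, is the strict inequality $|\theta_0(0)|<1$: this is exactly where the nonconstancy of $\theta_0$ enters, and it is what keeps $f$ bounded below on the disk. I would also pause to justify that invertibility in $H^\infty$ implies outerness, since the statement explicitly asks for an outer function.
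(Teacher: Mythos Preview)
Your proof is correct, and it is actually cleaner than the paper's. The paper argues by cases: if $\theta_0$ has a Blaschke factor $b_\alpha$, it uses the Szeg\H{o} kernel $\frac{1}{1-\overline{\alpha}z}$ at the zero $\alpha$; if $\theta_0$ is purely singular, it takes a ``square root'' $\omega$ with $\omega^2=\theta_0$, sets $\alpha=\overline{\omega(0)}$, and checks by hand that $\omega-\frac{1}{\alpha}$ lies in $\mathcal{H}_{\theta_0}$ and is invertible in $H^\infty$. Your choice of the model-space reproducing kernel at the origin, $1-\overline{\theta_0(0)}\,\theta_0$, handles both situations at once and avoids the ad hoc singular-inner construction entirely. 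What the paper's approach buys is that in the Blaschke case the kernel $\frac{1}{1-\overline{\alpha}z}$ is visibly a classical outer function, so no separate ``invertible $\Rightarrow$ outer'' remark is needed there; your argument trades that small convenience for a uniform one-line candidate, which is a good trade.
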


\begin{proof}
If $\theta_0$ has at least one
Blaschke factor, say $\frac{z-\alpha}{1-\overline\alpha z}$
($|\alpha|<1$), then $\frac{1}{1-\overline\alpha z}$ is an outer
function and $\frac{1}{1-\overline\alpha z}\in \mathcal H_{\theta_0}$
because $\frac{1}{1-\overline\alpha z}$ is the reproducing kernel
for $\alpha$, so that for any $f\in H^2$,
$$
\left\langle \theta_0 f ,\, \frac{1}{1-\overline\alpha z}
\right\rangle= \theta_0(\alpha)f(\alpha)=0.
$$
Now suppose $\theta_0$ is a nonconstant singular inner function of
the form
$$
\theta_0(z):= \hbox{exp}\left(-
\int_{-\pi}^{\pi}\frac{e^{i\theta}+z}{e^{i \theta}-z}\,d\mu(\theta)
\right),
$$
where $\mu$ is a finite positive Borel measure on $\mathbb T$ which
is singular with respect to Lebesgue measure. We put
$$
\omega (z):= \hbox{exp}\left(-
\int_{-\pi}^{\pi}\frac{e^{i\theta}+z}{e^{i
\theta}-z}\,d\frac{\mu}{2}(\theta) \right).
$$
Then $\omega^2=\theta_0$. If $\alpha:=\overline\omega(0)$ then
evidently, $0<|\alpha|<1$ since $\omega$ is not constant. Note that
$\overline{\theta}_0\left(\omega-\frac{1}{\alpha}\right)
=\overline\omega-\frac{1}{\alpha}\overline{\theta}_0
\in \left(H^2\right)^{\perp}$, since
$\left(\overline\omega-\frac{1}{\alpha}\overline{\theta}_0\right)(0)
=\alpha-\frac{1}{\alpha}\alpha^2=0$. We thus have
$\omega-\frac{1}{\alpha}\in \mathcal H_{\theta_0}$. Also a
straightforward calculation shows that
$\frac{1}{\omega-\frac{1}{\alpha}}$ is bounded and analytic in
$\mathbb D$, which says that $\omega-\frac{1}{\alpha}$ is invertible
in $H^\infty$. Hence $\omega-\frac{1}{\alpha}$ is an outer function
in $\mathcal H_{\theta_0}$. This completes the proof.
\end{proof}

\bigskip

Before proving the main result,
we recall the inner-outer factorization of vector-valued functions.
If $D$ and $E$ are Hilbert spaces and if
$F$ is a function with values in $\mathcal{B}(E,D)$ such that
$F(\cdot)e\in H^2_{D}(\mathbb T)$ for each $e\in E$,
then $F$ is called a strong $H^2$-function.
The strong $H^2$-function $F$ is called
a ({\it strong}) {\it inner function}
if $F(\cdot)$ is a unitary operator from $D$ into $E$.
Write $\mathcal{P}_{E}$ for the set of all polynomials with values in $E$, i.e.,
$p(\zeta)=\sum_{k=0}^n \widehat{p}(k)\zeta^k$, $\widehat{p}(k)\in E$.
Then the function $Fp=\sum_{k=0}^n F\widehat{p}(k) z^k$
belongs to $H^2_{D}(\mathbb T)$. The strong $H^2$-function $F$ is called
{\it outer} if
$$
\hbox{cl}\, F\cdot\mathcal{P}_E=H^2_{D}(\mathbb T).
$$
Note that if $\hbox{dim}\, D=\hbox{dim}\, E=n<\infty$, then
evidently, every $F\in H^2_{M_n}$ is a strong $H^2$-function. We
then have an analogue of the scalar Inner-Outer Factorization
Theorem.
\bigskip

\noindent{\bf Inner-Outer Factorization.} (cf. [Ni])\quad
Every strong $H^2$-function $F$ with values in $\mathcal{B}(E, D)$
can be expressed in the form
$$
F=F^iF^e,
$$
where $F^e$ is an outer function with values in $\mathcal{B}(E, D^\prime)$ and
$F^i$ is an inner function with values in $\mathcal{B}(D^\prime,D)$
for some Hilbert space $D^\prime$.

\bigskip

We are now ready to prove the main result of this paper.
\smallskip

\begin{thm}({\bf Abrahamse's Theorem for Matrix-Valued Symbols})\label{thm3.7}
Suppose $\Phi:=\Phi_-^*+ \Phi_+\in L^\infty_{M_n}$ is such that
$\Phi$ and $\Phi^*$ are
of bounded type. In view of Lemma \ref{lem3.1}, we may write
$$
\Phi_+=A^*\Theta_0\Theta_2\quad\hbox{and}\quad
\Phi_- = B^*\Theta_2,
$$
where $\Theta_i =\theta_i I_n$ with an inner function $\theta_i$ ($i=0,2$)
and $A, B\in H^{2}_{M_n}$. Assume that $A,B$ and $\Theta_2$
are left coprime.
If
\medskip

{\rm (i)} $T_\Phi$ is hyponormal; and

{\rm (ii)} $\text{\rm ker}\,[T_{\Phi}^*, T_{\Phi}]$ is invariant under $T_{\Phi}$
\medskip

\noindent then $T_{\Phi}$ is normal or analytic.
Hence, in particular, if $T_\Phi$ is subnormal then it is normal or analytic.
\end{thm}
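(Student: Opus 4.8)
The plan is to convert the two hypotheses into algebraic information about the data $A,B,\Theta_0,\Theta_2$ and then to force a dichotomy by pushing well-chosen vectors through the invariance of the self-commutator kernel. \textbf{Step 1 (reduce hyponormality to symbol data).} First I would apply the Gu--Hendricks--Rutherford criterion: since $T_\Phi$ is hyponormal, the symbol $\Phi$ is normal and there is $K\in H^\infty_{M_n}$ with $\|K\|_\infty\le1$ and $\Phi-K\Phi^*\in H^\infty_{M_n}$. Using $H_\Phi=H_{\Phi_-^*}$, $H_{\Phi^*}=H_{\Phi_+^*}$, the identities $\Phi_+^*=(\Theta_0\Theta_2)^*A$ and $\Phi_-^*=\Theta_2^*B$, and the special case $H_{\Theta^*}^*H_{\Theta^*}=I-T_\Theta T_{\Theta^*}$ of (\ref{1.5}) valid for inner $\Theta$, I would put the self-commutator in closed form
\[
[T_\Phi^*,T_\Phi]=H_{\Phi_+^*}^*H_{\Phi_+^*}-H_{\Phi_-^*}^*H_{\Phi_-^*}=T_{A^*}P_0\,T_A-T_{B^*}P_2\,T_B,
\]
where $P_0$ and $P_2$ are the orthogonal projections onto $\mathcal H_{\Theta_0\Theta_2}$ and $\mathcal H_{\Theta_2}$, respectively; hyponormality is exactly the positivity of this operator.

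\textbf{Step 2 (kernel structure and the invariance).} Let $\mathcal K:=\ker[T_\Phi^*,T_\Phi]$, so $F\in\mathcal K$ if and only if $\|H_{\Phi_+^*}F\|=\|H_{\Phi_-^*}F\|$. By Lemma~\ref{lem1.1} the kernels satisfy $\ker H_{\Phi^*}=\ker H_{\Phi_+^*}=\Delta_1H^2_{\mathbb{C}^n}\subseteq\Delta_2H^2_{\mathbb{C}^n}=\ker H_{\Phi_-^*}=\ker H_\Phi$, and a short computation gives $\ker H_{\Phi^*}\subseteq\mathcal K$ together with $\mathcal K\cap\ker H_\Phi=\ker H_{\Phi^*}$. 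The decisive step is to exploit $T_\Phi\mathcal K\subseteq\mathcal K$: I would take $F\in\ker H_{\Phi^*}\subseteq\mathcal K$, compute $T_\Phi F=T_{\Phi_-^*}F+\Phi_+F$, and then compare $\|H_{\Phi_+^*}(T_\Phi F)\|$ with $\|H_{\Phi_-^*}(T_\Phi F)\|$ by means of the Hankel--Toeplitz identities (\ref{1.5})--(\ref{1.5-1}). The requirement $T_\Phi F\in\mathcal K$ then turns into a rigid identity relating the co-analytic datum $B$ to the inner factor $\Theta_0$.

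\textbf{Step 3 (dichotomy and the crux).} The organizing intuition is whether the ``analytic excess'' $\Theta_0$ is a unitary constant. If it is, then $\Phi_+$ and $\Phi_-$ share the inner part $\Theta_2$, and I would combine the normality of the matrix symbol $\Phi$ from Step~1 with the invariance identity to collapse the closed form to $T_{A^*}P_2T_A=T_{B^*}P_2T_B$, i.e. $[T_\Phi^*,T_\Phi]=0$, so that $T_\Phi$ is normal. If $\Theta_0$ is nonconstant, I would invoke Lemma~\ref{lem3.6} to choose an outer $g\in\mathcal H_{\theta_0}$ that is invertible in $H^\infty$, and the inner--outer factorization $A=A^iA^e$, in order to manufacture test vectors (essentially of the form $\Theta_2\,g\,\xi$ and their $T_\Phi$-images) on which the invariance is consistent only when $B\equiv0$, whence $\Phi_-=0$ and $T_\Phi$ is analytic; the invertibility of $g$ is precisely what lets me divide out the scalar part and pass from an integrated identity to the pointwise conclusion. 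I expect this nonconstant case to be the main obstacle: one must simultaneously keep track of the two projections $P_0,P_2$, use the left-coprimeness of $A,B,\Theta_2$ (without which the factorization is not canonical and the cancellation fails, cf. the phenomenon in (\ref{1.10})), and verify that the constructed vectors genuinely belong to $\mathcal K$.

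\textbf{Step 4 (the subnormal corollary).} For the final sentence I would recall that a subnormal operator is $2$-hyponormal, and that $2$-hyponormality forces $\ker[T_\Phi^*,T_\Phi]$ to be invariant under $T_\Phi$. Hence a subnormal $T_\Phi$ automatically satisfies both (i) and (ii), and the theorem just proved applies to yield that $T_\Phi$ is normal or analytic.
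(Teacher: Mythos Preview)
Your Step~3 dichotomy is organized along the wrong axis, and this is a genuine gap rather than a stylistic choice. The correct split is on $\Theta_2$, not on $\Theta_0$: if $\Theta_2$ is a unitary constant then $\Phi_-=0$ and $T_\Phi$ is analytic, while if $\Theta_2$ is nonconstant the conclusion is that $T_\Phi$ is \emph{normal}, irrespective of whether $\Theta_0$ is constant. Your proposed route ``$\Theta_0$ nonconstant $\Rightarrow B\equiv 0$'' is simply not true under the stated hypotheses (already in the scalar case one can have $\theta_0$ nonconstant, $b\ne 0$, and $T_\phi$ normal), so no choice of test vectors of the form $\Theta_2 g\xi$ will force $B=0$. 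Likewise, your ``$\Theta_0$ constant $\Rightarrow$ normal'' branch is only a restatement of the goal; normality of the symbol $\Phi$ together with $\Theta_0$ constant does not by itself collapse $T_{A^*}P_2T_A$ onto $T_{B^*}P_2T_B$.

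The mechanism you are missing is the one you set up in Step~1 and then abandon: the function $K\in\mathcal E(\Phi)$. The paper's argument (assuming $\Theta_2$ nonconstant) runs as follows. First, the invariance hypothesis together with the left coprimeness of $B$ and $\Theta_2$ upgrades the trivial inclusion $\Theta_0\Theta_2 H^2_{\mathbb C^n}\subseteq\mathcal K$ to $\Theta_0 H^2_{\mathbb C^n}\subseteq\mathcal K$; this is where a density claim of the type $\mathcal H_{\Theta_2}=\overline{\{P_{\mathcal H_{\Theta_2}}(Bg):g\in\mathcal P_{\mathbb C^n}\}}$ (via inner--outer factorization of $B$) is used. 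Second, one rewrites
\[
[T_\Phi^*,T_\Phi]=H_{\Phi_+^*}^*\bigl(I-T_{\widetilde K}T_{\widetilde K}^*\bigr)H_{\Phi_+^*},
\]
so that $\Theta_0 H^2_{\mathbb C^n}\subseteq\mathcal K$ yields $\mathrm{cl\,ran}\,H_{A\Theta_2^*}\subseteq\ker(I-T_{\widetilde K}T_{\widetilde K}^*)$; the left coprimeness of $A$ and $\Theta_2$ identifies the left side as $\mathcal H_{\widetilde\Theta_2}$. Now Lemma~\ref{lem3.6} is applied to $\theta_2$ (not to $\theta_0$): an invertible outer $h\in\mathcal H_{\widetilde\theta_2}$ forces $\widetilde K\widetilde K^*=I$ pointwise, so $K$ is inner; a second use of $h$ shows each entry of $K$ is constant. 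With $K$ a constant unitary the displayed identity gives $[T_\Phi^*,T_\Phi]=0$. Your Step~4 is fine.
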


\medskip

\begin{rem}\label{rem3.8-1}
We note that if $n=1$ (i.e., $T_\Phi$ is a scalar Toeplitz operator)
then $\Phi_+=\overline a \theta_0\theta_2$ and
$\Phi_-=\overline{b}\theta_2$ with $a,b\in H^2$.
Thus, we can always arrange that $a,b$ and $\theta$ are coprime.
Consequently, if $n=1$ then our matrix version reduces to the original Abrahamse's Theorem.
\end{rem}

\smallskip

\begin{proof}
If $\Theta_2$ is constant then $\Phi_-=0$, so that $T_\Phi$ is analytic.
Suppose that $\Theta_2$ is nonconstant.

We split the proof into three steps.

\medskip

STEP 1: We first claim that
\begin{equation}\label{3.9}
\Theta_0 H^2_{\mathbb C^n}\ \subseteq\ \hbox{\rm ker}\,[T_{\Phi}^*,
T_{\Phi}].
\end{equation}
To see this, we observe that
\begin{equation}\label{3.9-1}
[T_{\Phi}^* , T_{\Phi}]=
H_{\Phi_+^*}^*H_{\Phi_+^*}-H_{\Phi_-^*}^*H_{\Phi_-^*}
=H_{A\Theta_2^*\Theta_0^*}^* H_{A\Theta_2^*\Theta_0^*} - H_{\Theta_2^*B}^* H_{\Theta_2^* B},
\end{equation}
which implies that
\begin{equation}\label{3.10}
\Theta_0\Theta_2 H^2_{\mathbb{C}^n}\, \subseteq\, \hbox{\rm ker}\,[T_{\Phi}^*,
T_{\Phi}].
\end{equation}
On the other hand,
since $\Theta_0\Theta_2$ is diagonal, we have that for all $g \in
\mathcal{P}_{\mathbb{C}^n}$,
$$
\aligned T_{\Phi}(\Theta_0\Theta_2 g)
&=P_n(\Theta_2^* B \Theta_0\Theta_2 g + \Phi_+ \Theta_0\Theta_2 g)\\
&=\Theta_0 B g +\Theta_0\Theta_2 \Phi_+ g\\
&=P_{\mathcal H_{\Theta_0\Theta_2}}( \Theta_0 B g)+P_{\Theta_0\Theta_2
H^2_{\mathbb{C}^n}} (\Theta_0 B g)+\Theta_0\Theta_2  \Phi_+ g.
\endaligned
$$
Since $\mathcal H_{\Theta_0 \Theta_2}=\mathcal
H_{\Theta_0} \bigoplus \Theta_0\mathcal H_{\Theta_2}$, it follows that
$$
P_{\mathcal H_{\Theta_0\Theta_2}}(\Theta_0 B g)=P_{\Theta_0\mathcal
H_{\Theta_2}}( \Theta_0 B g).
$$
We thus have
\begin{equation}\label{3.11}
T_{\Phi}(\Theta_0\Theta_2 g)=P_{\Theta_0\mathcal H_{\Theta_2}}(\Theta_0 B
g)+P_{\Theta_0\Theta_2 H^2_{\mathbb{C}^n}} (\Theta_0 B g)+\Theta_0\Theta_2 \Phi_+g.
\end{equation}
We claim that
\begin{equation}\label{3.12}
\mathcal H_{\Theta_2} = \hbox{cl}\,\Bigl\{P_{\mathcal H_{\Theta_2}}
(B g): g \in \mathcal{P}_{\mathbb{C}^n}\Bigr\}.
\end{equation}
In view of the above mentioned Inner-Outer Factorization, let $B=B^i B^e$ be the
inner-outer factorization of $B$ (as a strong $H^2$-function),
where $B^i\in H^\infty_{M_{n\times r}}$ and $B^e\in H^2_{M_{r\times n}}$.
Since $B$ and $\Theta_2$ are left coprime, $B^i$ and $\Theta_2$
are left coprime. Thus it follows from the Beurling-Lax-Halmos
Theorem that
$$
\Theta_2 H^2_{\mathbb{C}^n} \bigvee \hbox{cl}\,B
\mathcal{P}_{\mathbb{C}^n} =\Theta_2 H^2_{\mathbb{C}^n} \bigvee
B^i \bigl( \hbox{cl}\, B^e \mathcal{P}_{\mathbb{C}^n}\bigr)
=\Theta_2 H^2_{\mathbb{C}^n}\bigvee B^i H^2_{\mathbb{C}^r} =
H^2_{\mathbb{C}^n},
$$
giving (\ref{3.12}). Thus we have
\begin{equation}\label{3.13}
\Theta_0\mathcal H_{\Theta_2} =\hbox{cl}\, \Theta_0 \Bigl\{P_{
\mathcal H_{\Theta_2}}( B g): g \in
\mathcal{P}_{\mathbb{C}^n}\Bigr\}=\hbox{cl}\,  \Bigl\{P_{\Theta_0
\mathcal H_{\Theta_2}}( \Theta_0 B g): g \in
\mathcal{P}_{\mathbb{C}^n}\Bigr\}.
\end{equation}
If $\hbox{ker}\,[T_{\Phi}^*, T_{\Phi}]$ is invariant under
$T_{\Phi}$ then since $\text{ker}\,[T_{\Phi}^*, T_{\Phi}]$ is a
closed subspace it follows from (\ref{3.10}) -  (\ref{3.13}) that
$$
\Theta_0\mathcal H_{\Theta_2} \subseteq \text{ker}\,[T_{\Phi}^*,
T_{\Phi}].
$$
We thus have
$$
\Theta_0 H^2_{\mathbb C^n} =
\Theta_0\mathcal H_{\Theta_2}\ \bigoplus\ \Theta_0\Theta_2 H^2_{\mathbb C^n}
\subseteq \hbox{\rm ker}\,[T_{\Phi}^*, T_{\Phi}],
$$
which proves (\ref{3.9}).
\medskip

\medskip

STEP 2: We next claim that
\begin{equation}\label{3.13-1}
\hbox{$\mathcal{E}(\Phi)$
contains an inner function $K$.}
\end{equation}
To see this, we first observe that if $K\in\mathcal{E}(\Phi)$ then by (\ref{1.5-1}),
\begin{equation}\label{3.13-2}
[T_\Phi^*, T_\Phi]=H_{\Phi_+^*}^*H_{\Phi_+^*}-H_{K\Phi_+^*}^*H_{K\Phi_+^*}
=H_{\Phi_+^*}^* (I- T_{\widetilde K}T_{\widetilde K}^*) H_{\Phi_+^*}\,,
\end{equation}
so that
$$
\hbox{ker}\,[T_\Phi^*, T_\Phi]=\hbox{ker}\,(I- T_{\widetilde K}T_{\widetilde K}^*) H_{\Phi_+^*}.
$$
Thus by (\ref{3.9}),
$$
\{0\}=(I-T_{\widetilde{K}}T_{\widetilde{K}}^*)H_{A\Theta_2^*\Theta_0^*}(\Theta_0 H_{\mathbb C^n}^2)
= (I-T_{\widetilde{K}}T_{\widetilde{K}}^*)H_{A\Theta_2^*}(H_{\mathbb C^n}^2)\,,
$$
which implies
\begin{equation}\label{3.15}
\hbox{cl ran}\, H_{A\Theta_2^*}\subseteq \hbox{ker}\,(I-T_{\widetilde{K}}T_{\widetilde{K}}^*).
\end{equation}
Since by assumption, $A$ and $\Theta_2$ are left coprime, and hence
$\widetilde A$ and $\widetilde\Theta_2$ are are right coprime,
it follows from Lemma \ref{lem1.1} that
\begin{equation}\label{3.15-1}
\hbox{cl ran}\, H_{A\Theta_2^*}=\Bigl(\hbox{ker}\, H_{\widetilde A\widetilde \Theta_2^*}\Bigr)^{\perp}
=\Bigl(\widetilde\Theta_2 H^2_{\mathbb C^n}\Bigr)^\perp
=\mathcal{H}_{\widetilde\Theta_2},
\end{equation}
which together with (\ref{3.15}) implies
\begin{equation}\label{3.14-1}
\mathcal{H}_{\widetilde \Theta_2} \subseteq \hbox{ker}\,(I-T_{\widetilde{K}}T_{\widetilde{K}}^*).
\end{equation}
We thus have
\begin{equation}\label{3.14-2}
F=T_{\widetilde{K}}T_{\widetilde{K}}^* F\quad\hbox{for each $F\in\mathcal H_{\widetilde{\Theta}_2}$}.
\end{equation}
But since $||\widetilde K||_\infty=||K||_\infty \le 1$, we have
$$
||P_n(\widetilde K^*F)||_2\le ||\widetilde K^*F||_2\le ||F||_2
=||T_{\widetilde{K}}T_{\widetilde{K}}^* F||_2
=||\widetilde K P_n(\widetilde K^*F)||_2
\le ||P_n(\widetilde K^*F)||_2,
$$
which gives
$$
||P_n(\widetilde K^*F)||_2=||\widetilde K^*F||_2,
$$
which implies $\widetilde K^*F\in H^2_{\mathbb C^n}$. Therefore by (\ref{3.14-2}), we have
$$
F=\widetilde{K}\widetilde{K}^*F\quad\hbox{for each $F\in\mathcal H_{\widetilde{\Theta}_2}$}.
$$
In view of Lemma \ref{lem3.6}, we can choose an outer function $f\in\mathcal
H_{\widetilde{\theta}_2}$, which is invertible in $H^\infty$.
For each $j=1,2, \cdots, n$, we define
$$
F_j:=(0,\cdots, 0, f,0, \cdots,0)^t \quad\hbox{(where $f$ is the
$j$-th component)}.
$$
Then $F_j\in\mathcal H_{\widetilde{\Theta}_2}$ for each $j=1,2, \cdots, n$,
so that $(I-\widetilde{K}\widetilde{K}^*)F_j=0$ for each $j=1,2,\cdots,n$.
If we write $I-\widetilde{K}\widetilde{K}^*\equiv [q_{ij}]_{1\le i,j\le n}\in L^\infty_{M_n}$,
then
$q_{ij}f=0$ for each $i,j=1,2,\cdots,n$, so that $q_{ij}=0$ for each $i,j=1,2,\cdots,n$
because $f$ is invertible.
Therefore we have $K^*K=\widetilde{I}=I$,
which implies that $K$ is an inner function.
This proves (\ref{3.13-1}).

\medskip

STEP 3: Now since $K$ is inner it follows from (\ref{1.5}) that
$$
I-T_{\widetilde{K}}T_{\widetilde{K}}^*=T_{\widetilde{K}\widetilde{K}^*}-T_{\widetilde{K}}T_{\widetilde{K}}^*
=H_{\widetilde{K}^*}^*H_{\widetilde{K}^*}.
$$
Thus by (\ref{3.14-1}), we have
\begin{equation}\label{3.14-3}
\mathcal{H}_{\widetilde \Theta_2} \subseteq \hbox{ker}\,H_{\widetilde{K}^*}^* H_{\widetilde K^*}= \hbox{ker}\,H_{\widetilde K^*}.
\end{equation}
Write $K:=[k_{ij}]_{1\le i,j\le n}\in H^\infty_{M_n}$.
Since, by Lemma \ref{lem3.6}, $\mathcal{H}_{\widetilde \theta_2}$ contains an outer function $h$ that is invertible in $H^\infty$, 
it follows from (\ref{3.14-3}) that
$$
k_{ij}(\overline z) h\in H^2\quad\hbox{for each $i,j=1,2,\cdots,n$},
$$
so that $k_{i,j}(\overline z)\in \frac{1}{h}H^2\subseteq H^2$ for each $i,j=1,2,\cdots,n$.
Therefore each $k_{ij}$ is constant and hence, $K$ is constant.
Therefore by (\ref{3.13-2}),
$[T_\Phi^*, T_\Phi]=0$, i.e., $T_\Phi$ is normal.
This completes the proof.
\end{proof}

\bigskip

\begin{rem}\label{rem3.11}
Theorem \ref{thm3.7} may fail if the condition ``$B$ and
$\Theta_2$ are left coprime" is dropped even though $A$ and $\Theta_2$ are left coprime.
To see this, let $\theta$ be a nonconstant
finite Blaschke product. Consider the matrix-valued function
$$
\Phi=\begin{bmatrix} 2\theta+\overline{\theta}&\overline{\theta}\\
\overline{\theta}&2\theta+\overline{\theta}\end{bmatrix}.
$$
Write
$$
\Theta:=\begin{bmatrix}\theta&0\\0&\theta\end{bmatrix}.
$$
Then
$$
\Phi_+=2\Theta \quad \hbox{and} \quad
\Phi_-=\begin{bmatrix}\theta&\theta\\\theta&\theta\end{bmatrix}
=\begin{bmatrix}1&1\\1&1\end{bmatrix}^*\Theta\,.
$$
A direct calculation shows that $\Phi$ is normal. Put
$K:=\frac{1}{2}\begin{bmatrix}1&1\\1&1\end{bmatrix}$.
Remember that for a matrix-valued function $A$, we define
$||A||_\infty:=\sup_{t\in\mathbb T}||A(t)||$
(where $||\cdot||$ means the operator norm).
Then
$||K||_{\infty}=1$ and $\Phi-K\Phi^* \in H^{\infty}_{M_2}$,
so that $T_{\Phi}$ is
hyponormal. Observe that
$$
\aligned ~[T_{\Phi^*},
 T_{\Phi}]&=H_{\Phi_+^*}^*H_{\Phi_+^*}-H_{\Phi_-^*}^*H_{\Phi_-^*}\\
 &=4\begin{bmatrix}H_{\overline{\theta}}^*H_{\overline{\theta}}
 &0\\0&H_{\overline{\theta}}^*H_{\overline{\theta}}\end{bmatrix}-
 2\begin{bmatrix}H_{\overline{\theta}}^*H_{\overline{\theta}}
 &H_{\overline{\theta}}^*H_{\overline{\theta}}\\H_{\overline{\theta}}^*
 H_{\overline{\theta}}&H_{\overline{\theta}}^*H_{\overline{\theta}}\end{bmatrix}\\
&=2\begin{bmatrix}H_{\overline{\theta}}^*H_{\overline{\theta}}
&-H_{\overline{\theta}}^*H_{\overline{\theta}}\\
-H_{\overline{\theta}}^*H_{\overline{\theta}}
&H_{\overline{\theta}}^*H_{\overline{\theta}}\end{bmatrix}\\
&=2\begin{bmatrix}P_{\mathcal H_{\theta}}&-P_{\mathcal H_{\theta}}\\
-P_{\mathcal H_{\theta}}&P_{\mathcal H_{\theta}}\end{bmatrix},\\
\endaligned
$$
which gives
$$
\begin{aligned}
\hbox{ker}[T_{\Phi^*}, T_{\Phi}]=
\hbox{ker}\begin{bmatrix}P_{\mathcal H_{\theta}}&-P_{\mathcal H_{\theta}}\\
-P_{\mathcal H_{\theta}}&P_{\mathcal H_{\theta}}\end{bmatrix}
&=\left\{\begin{bmatrix} f\\ g\end{bmatrix}:\
P_{\mathcal H_{\theta}} f = P_{\mathcal H_{\theta}} g \right\}\\
&=\Theta H^2_{\mathbb C^2} \oplus \bigl\{f\oplus f : f \in \mathcal H_{\theta} \bigr\}.
\end{aligned}
$$
We now claim that $\text{\rm ker}\,[T_{\Phi}^*, T_{\Phi}]$ is invariant under
$T_{\Phi}$.
To show this we suppose
$$
F=\begin{bmatrix}f\\g\end{bmatrix} \in  \text{\rm ker}\,[T_{\Phi}^*,
T_{\Phi}].
$$
Then
$$
T_{\Phi}F= \begin{bmatrix} 2 T_{\theta}
+T_{\overline{\theta}}&T_{\overline{\theta}}\\T_{\overline{\theta}}
&2T_{\theta}+T_{\overline{\theta}}\end{bmatrix}\begin{bmatrix}f\\g\end{bmatrix}
=\begin{bmatrix}2T_{\theta}
f+T_{\overline{\theta}}(f+g)\\2T_{\theta}g+T_{\overline{\theta}}(f+g)\end{bmatrix}.
$$
Observe that
$$
P_{\mathcal H_{\theta}}\bigl(2T_{\theta}
f+T_{\overline{\theta}}(f+g)\bigr)=P_{\mathcal H_{
\theta}}T_{\overline{\theta}}(f+g)
= P_{\mathcal H_{\theta}}\bigl(2T_{\theta}
g+T_{\overline{\theta}}(f+g)\bigr),
$$
which implies that $\text{\rm ker}\,[T_{\Phi}^*, T_{\Phi}]$ is
invariant under $T_{\Phi}$.
But since
$$
\{f\oplus f : f \in \mathcal H_{\theta} \} \subsetneq \mathcal H_{\Theta}\,,
\ \ \hbox{and hence,}\ \ \hbox{ker}\,[T_\Phi^*, T_\Phi]\ne H^2_{\mathbb C^2},
$$
we can see that $T_{\Phi}$ is not normal.
Note that by Lemma \ref{lem3.12},
$\left[\begin{smallmatrix}1&1\\1&1\end{smallmatrix}\right]$ and $\Theta$ are not left
coprime.
\end{rem}

\bigskip

If, in the left coprime factorization $\Phi_-=B^*\Theta_2$
($\Theta_2=\theta_2 I_n$) of Theorem \ref{thm3.7}, $\theta_2$ has a
Blaschke factor, then the assumption of the ``left coprime
factorization" for the analytic part $\Phi_+$ of $\Phi$ can be
dropped in Theorem \ref{thm3.7}.

\medskip

\begin{cor}\label{cor3.8}
Suppose $\Phi=\Phi_-^*+ \Phi_+\in L^\infty_{M_n}$ is such that
$\Phi$ and $\Phi^*$ are of bounded type. In view of (\ref{1.9}), we
may write
$$
\Phi_- = B^*\Theta,
$$
where $\Theta:=\theta I_n$  with an inner function $\theta$. Assume
that $B$ and $\Theta$ are left coprime. Assume also that $\theta$
contains a Blaschke factor. If
\medskip

{\rm (i)} $T_\Phi$ is hyponormal; and

{\rm (ii)} $\text{\rm ker}\,[T_{\Phi}^*, T_{\Phi}]$ is invariant under $T_{\Phi}$
\medskip

\noindent then $T_{\Phi}$ is normal or analytic.
Hence, in particular, if $T_\Phi$ is subnormal then it is normal or analytic.
\end{cor}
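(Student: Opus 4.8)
The plan is to run the proof of Theorem \ref{thm3.7} essentially unchanged, using the Blaschke factor of $\theta$ as a substitute for the now-missing left coprimeness of $A$ and $\Theta$. Writing $\Phi_+=A^*\Theta_0\Theta$ as in Theorem \ref{thm3.7} (via Lemma \ref{lem3.1}), recall that the assumption ``$A$ and $\Theta$ are left coprime'' is used there at exactly one place: in passing from (\ref{3.15}) to (\ref{3.15-1}), where it identifies $\mathrm{cl\,ran}\,H_{A\Theta^*}$ with the \emph{full} model space $\mathcal H_{\widetilde\Theta}$. Everything else carries over: the reduction to $\theta$ nonconstant; STEP 1 (which uses hypothesis (ii) and, among the coprimeness hypotheses, only the left coprimeness of $B$ and $\Theta$), yielding $\Theta_0H^2_{\mathbb C^n}\subseteq\ker[T_\Phi^*,T_\Phi]$; and the derivation of $\mathrm{cl\,ran}\,H_{A\Theta^*}\subseteq\ker(I-T_{\widetilde K}T_{\widetilde K}^*)$ together with the contractivity argument giving $\widetilde K\widetilde K^*F=F$ pointwise for every $F\in\mathrm{cl\,ran}\,H_{A\Theta^*}$. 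So the whole problem is to recover enough of (\ref{3.15-1}) from the Blaschke hypothesis to run the final steps.

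First I would fix $\alpha\in\mathcal Z(\theta)$ coming from a Blaschke factor, so $0<|\alpha|<1$ and the scalar kernel $k(z):=\frac{1}{1-\alpha z}$ is an invertible outer function in $\mathcal H_{\widetilde\theta}$ (note $\widetilde\theta(\overline\alpha)=\overline{\theta(\alpha)}=0$). The goal is to show $k\,v\in\mathrm{cl\,ran}\,H_{A\Theta^*}$ for every $v\in\mathbb C^n$. Writing $\Theta=\Omega\Theta'$ with $\Omega$ the greatest common left inner divisor of $A$ and $\Theta$, one has (by Lemma \ref{lem1.1}, since $\ker H_{A\Theta^*}^*=\ker H_{\widetilde A\widetilde\Theta^*}=\widetilde{\Theta'}H^2_{\mathbb C^n}$) that $\mathrm{cl\,ran}\,H_{A\Theta^*}=\mathcal H_{\widetilde{\Theta'}}$; and $k\,v$ lies in $\mathcal H_{\widetilde{\Theta'}}$ for all $v$ precisely when $\Theta'(\alpha)=0$. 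Because $\Theta(\alpha)=\theta(\alpha)I_n=0$ and $\Theta=\Omega\Theta'$, this holds as soon as $\Omega(\alpha)$---equivalently $A(\alpha)$---is invertible. Thus the crux reduces to proving that $A(\alpha)$ is invertible.

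To get this I would combine two facts. On one hand, left coprimeness of $B$ and $\Theta$ forces $B(\alpha)$ to be invertible at the Blaschke zero $\alpha$ (the local content of Lemma \ref{lem3.12}: a nonzero vector in $\ker B(\alpha)$ would manufacture a common left Blaschke--Potapov factor at $\alpha$). On the other hand, hyponormality gives $K\in\mathcal E(\Phi)$ with $\Phi_-^*-K\Phi_+^*\in H^2_{M_n}$, i.e. $\overline\theta(B-\overline{\theta_0}KA)\in H^2_{M_n}$; multiplying by the inner $\theta$ shows $R:=B-\overline{\theta_0}KA$ itself lies in $\theta H^2_{M_n}$, and in particular $B-\overline{\theta_0}KA\in H^2_{M_n}$, so $\theta_0$ divides $KA$, say $KA=\theta_0C$ with $C\in H^2_{M_n}$ and $R=B-C\in\theta H^2_{M_n}$. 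Evaluating at $\alpha$ gives $B(\alpha)=C(\alpha)$; when $\theta_0(\alpha)\neq0$ this reads $B(\alpha)=\theta_0(\alpha)^{-1}K(\alpha)A(\alpha)$, and invertibility of $B(\alpha)$ forces $A(\alpha)$ to be invertible, as wanted. Once $k\,v\in\mathrm{cl\,ran}\,H_{A\Theta^*}$ for all $v$, dividing the identity $\widetilde K\widetilde K^*(k\,v)=k\,v$ by the a.e. nonvanishing scalar $k$ yields $\widetilde K\widetilde K^*=I$, so $K$ is inner; and feeding the same kernel into $\mathcal H_{\widetilde{\Theta'}}\subseteq\ker H_{\widetilde K^*}$ (as in STEP 3, now with $k$ in place of the outer function of Lemma \ref{lem3.6}) shows each entry $k_{ij}$ is simultaneously analytic and anti-analytic, hence constant. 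Then $K$ is a unitary constant and, by (\ref{3.13-2}), $[T_\Phi^*,T_\Phi]=0$, so $T_\Phi$ is normal.

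The main obstacle is exactly the invertibility of $A(\alpha)$, and inside it the degenerate sub-case $\theta_0(\alpha)=0$ (the analytic part $\Phi_+$ carrying extra vanishing at $\alpha$). There $KA=\theta_0C$ yields only $K(\alpha)A(\alpha)=0$ together with $C(\alpha)=B(\alpha)$ invertible, which does not by itself exclude a singular $A(\alpha)$; resolving it appears to require a higher-order (Hermite--Fej\'er type) analysis at $\alpha$, in the spirit of the interpolation argument in the proof of Lemma \ref{lem3.12}, or a positivity argument exploiting $[T_\Phi^*,T_\Phi]\ge0$. This is where I expect the genuine work to lie.
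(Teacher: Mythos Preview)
Your plan matches the paper's almost exactly: both rerun the proof of Theorem~\ref{thm3.7} with the single Blaschke factor $B_\alpha:=b_\alpha I_n$ standing in for $\Theta_2$ at the one spot where left coprimeness of $A$ and $\Theta_2$ was invoked. STEP~1 goes through verbatim (it needs only the left coprimeness of $B$ and $\Theta_2$, as you say and as the paper explicitly remarks), one obtains $\mathrm{cl\,ran}\,H_{A\Theta_2^*}\subseteq\ker(I-T_{\widetilde K}T_{\widetilde K}^*)$, and the remaining task is to prove $\mathcal H_{\widetilde B_\alpha}\subseteq\mathrm{cl\,ran}\,H_{A\Theta_2^*}$; once that is secured, STEPS~2 and~3 finish just as you outline, with the reproducing kernel $k$ playing the role of the invertible outer function from Lemma~\ref{lem3.6}.

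The divergence is precisely at the crux you isolate. You try to force $A(\alpha)$ invertible by evaluating the relation $\Phi_-^*-K\Phi_+^*\in H^2_{M_n}$ at $\alpha$, and you correctly find that the case $\theta_0(\alpha)=0$ blocks this; you then propose a higher-order Hermite--Fej\'er analysis or a positivity argument. The paper takes neither of these routes. Instead it passes through \emph{right} coprime factorizations: since $B$ and $\Theta_2$ are left coprime and $B_\alpha$ left-divides $\Theta_2$, $B$ and $B_\alpha$ are left coprime; by Lemma~\ref{lem3.12} (the equivalence of left and right coprimeness against a scalar-diagonal finite Blaschke product), $B$ and $B_\alpha$ are also right coprime. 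This makes $B_\alpha$ a left divisor of the right-coprime inner factor of $\Phi_-$, and Lemma~\ref{lem3.2} (the right-coprime analogue of Lemma~\ref{lem3.1}) then transfers this to $\Phi_+$: one can write $\Phi_+=B_\alpha\Delta_1 A_r^*$ with $A_r$ and $B_\alpha\Delta_1$ right coprime. From this the paper records that $A$ and $B_\alpha$ are right coprime, and hence (again by Lemma~\ref{lem3.12}) left coprime, which is exactly what gives $\mathcal H_{\widetilde B_\alpha}\subseteq\mathrm{cl\,ran}\,H_{A\Theta_2^*}$. So the ingredient missing from your proposal is the combination Lemma~\ref{lem3.2} $+$ Lemma~\ref{lem3.12}, applied on the \emph{right}-coprime side; the paper does not use any pointwise evaluation of $K$, any Hermite--Fej\'er interpolation, or any separate positivity argument at this step.
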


\medskip

\begin{proof}
For notational convenience, we let $\Theta_2:=\Theta$ and $\theta_2:=\theta$. Now suppose
$\theta_2$ has a Blaschke factor $b_\alpha$ and write
$B_\alpha:=b_\alpha I_n$. By assumption, $B$ and $B_\alpha$ are left
coprime, so that by Lemma \ref{lem3.12}, $B$ and $B_\alpha$ are
right coprime. Thus, in view of Lemmas \ref{lem3.1} and
\ref{lem3.2}, we can write
$$
\Phi_+ = A^*\Theta_0 \Theta_2=B_\alpha \Delta_1 A_r^* \quad
\hbox{and} \quad \Phi_-=B^*\Theta_2,
$$
where $\Theta_i=\theta_i I_n$ with an inner function $\theta_i$
($i=0,2$), $A_r$ and $B_\alpha \Delta_1$ are right coprime, and $B$
and $\Theta_2$ are left coprime. In particular, we note that $A$ and
$B_\alpha$ are right coprime, so that again by Lemma \ref{lem3.12}, $A$
and $B_\alpha$ are left coprime. On the other hand, an analysis for
the proof of STEP 1 of Theorem \ref{thm3.7} shows that
\begin{equation}\label{3.16-7}
\Theta_0 H^2_{\mathbb C^n}\ \subseteq\ \hbox{\rm ker}\,[T_{\Phi}^*, T_{\Phi}].
\end{equation}
(Note that we didn't employ the assumption ``$A$ and $\Theta_2$ are
left coprime" to get (\ref{3.16-7}) in the proof of STEP 1 of
Theorem \ref{thm3.7}.) Thus if $K\in\mathcal{E}(\Phi)$ then by the
same argument as (\ref{3.15}), we have
\begin{equation}\label{3.16-9}
\hbox{cl ran}\, H_{A\Theta_2^*}\subseteq
\hbox{ker}\,(I-T_{\widetilde{K}}T_{\widetilde{K}}^*).
\end{equation}
Since $A$ and $B_\alpha$ is left coprime it follows that $\widetilde A$ and
$\widetilde B_\alpha$ are right coprime. Thus we can write
$$
\widetilde A\widetilde\Theta_2^*=\widetilde A_1
\widetilde\Omega^*\widetilde B_\alpha^*\ \hbox{(right coprime factorization)}
$$
for some inner function $\Omega$ and $A_1\in H^2_{M_n}$.
It thus follows from Lemma \ref{lem1.1} that
$$
\hbox{cl ran}\, H_{A\Theta_2^*}
    =\Bigl(\hbox{ker}\, H_{\widetilde A\widetilde \Theta_2^*}\Bigr)^{\perp}
          =\Bigl(\widetilde B_\alpha \widetilde\Omega \, H^2_{\mathbb C^n}\Bigr)^\perp
            \supseteq\Bigl(\widetilde B_\alpha H^2_{\mathbb C^n}\Bigr)^\perp
                 =\mathcal{H}_{\widetilde{B}_{\alpha}}.
$$
Thus by (\ref{3.16-9}),
$$
\mathcal{H}_{\widetilde{B}_{\alpha}} \subseteq
\hbox{ker}\,(I-T_{\widetilde{K}}T_{\widetilde{K}}^*).
$$
Then the exactly same argument as the argument from (\ref{3.14-1}) to the end of
the proof of Theorem \ref{thm3.7} with $B_\alpha$ in place of $\Theta_2$
shows that $T_\Phi$ is normal.
(We again note that we didn't employ the assumption ``$A$ and $\Theta_2$ are
left coprime" there.)
This completes the proof.
\end{proof}

\bigskip

We thus have:
\medskip

\begin{cor}\label{cor3.9}
Suppose $\Phi=\Phi_-^*+ \Phi_+\in L^\infty_{M_n}$ is a matrix-valued rational function.
In view of (\ref{1.9}) and (\ref{1.4}), we may write
$$
\Phi_- = B^*\Theta,
$$
where $\Theta:=\theta I_n$  with a finite Blaschke product $\theta$.
Assume that $B(\alpha)$ is invertible for each $\alpha \in \mathcal Z(\theta)$.
If $T_{\Phi}$ is subnormal then $T_{\Phi}$ is normal or analytic.
\end{cor}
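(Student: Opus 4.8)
The plan is to derive Corollary~\ref{cor3.9} as an immediate specialization of Corollary~\ref{cor3.8}, which already carries all the analytic content (via Theorem~\ref{thm3.7}) and, in its final clause, handles the \emph{subnormal} case directly. Thus the entire task reduces to verifying, under the present hypotheses, the two structural assumptions of Corollary~\ref{cor3.8}: that $\theta$ contains a Blaschke factor, and that $B$ and $\Theta$ are left coprime.

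First I would record what rationality buys. Since $\Phi$ is a matrix-valued rational function, each entry of $\Phi$ and of $\Phi^*$ is rational, so both $\Phi$ and $\Phi^*$ are of bounded type; and, as noted after \eqref{1.9} and in \eqref{1.4}, the inner function $\theta$ in the representation $\Phi_-=B^*\Theta$ may be taken to be a finite Blaschke product. I would then separate off the degenerate possibility that $\theta$ is a (unimodular) constant: in that case $\Theta$ is a unitary constant, so $\Phi_-^*=\Theta^*B$ is analytic while at the same time $\Phi_-^*=P_n^\perp\Phi$ has only strictly negative Fourier modes; hence $\Phi_-=0$, $\Phi=\Phi_+\in H^\infty_{M_n}$, and $T_\Phi$ is analytic.

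In the remaining case $\theta$ is a nonconstant finite Blaschke product, hence a nonempty product of Blaschke factors, so in particular it contains a Blaschke factor --- one of the two hypotheses of Corollary~\ref{cor3.8}. For the other, I would invoke Lemma~\ref{lem3.12} with $\Theta=\theta I_n$: the standing assumption that $B(\alpha)$ is invertible for every $\alpha\in\mathcal Z(\theta)$ is precisely condition (a) there, and its equivalence (a)$\Leftrightarrow$(c) yields that $B$ and $\Theta$ are left coprime. With $\Phi,\Phi^*$ of bounded type, $B$ and $\Theta$ left coprime in $\Phi_-=B^*\Theta$, and $\theta$ containing a Blaschke factor, all hypotheses of Corollary~\ref{cor3.8} are satisfied, and its concluding sentence gives that a subnormal $T_\Phi$ is normal or analytic.

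I do not anticipate a genuine obstacle, as the proof is essentially an assembly: the depth resides in Corollary~\ref{cor3.8} and Theorem~\ref{thm3.7}, while here one only reinterprets the concrete, pointwise-checkable condition ``$B(\alpha)$ invertible'' as left coprimeness through Lemma~\ref{lem3.12}, together with the minor constant-$\theta$ degeneracy. The one substantive fact that makes the subnormal clause of Corollary~\ref{cor3.8} applicable, and which I would keep in mind, is that subnormality forces $\ker[T_\Phi^*,T_\Phi]$ to be invariant under $T_\Phi$; this is seen by writing a normal extension of $T_\Phi$ in block-triangular form $\left[\begin{smallmatrix} T_\Phi & X \\ 0 & Y\end{smallmatrix}\right]$, reading off $[T_\Phi^*,T_\Phi]=XX^*$ and $X^*T_\Phi=YX^*$, and noting that $\ker X^*=\ker[T_\Phi^*,T_\Phi]$ is therefore carried into itself by $T_\Phi$.
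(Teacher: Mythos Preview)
Your proposal is correct and follows essentially the same route as the paper, which simply says the result ``follows at once from Corollary~\ref{cor3.8} together with Lemma~\ref{lem3.12}.'' Your version merely spells out the details (handling the constant-$\theta$ case and recalling why subnormality gives the invariance of $\ker[T_\Phi^*,T_\Phi]$), all of which is consistent with the paper's argument.
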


\begin{proof} This follows at once from Corollary \ref{cor3.8}
together with Lemma \ref{lem3.12}.
\end{proof}

\vskip 1cm

%
%

\section{Scalar Toeplitz operators with finite rank self-commutators}

\noindent
If $\Phi$ is normal and analytic then
$[T_\Phi^*,T_\Phi]=H_{\Phi^*}^*H_{\Phi^*}$, so that by the
Kronecker's Lemma, $T_\Phi$ has a finite rank self-commutator if and
only if $\Phi$ is rational. Therefore Corollary \ref{cor3.9} illustrates the
case of subnormal Toeplitz operators with finite rank
self-commutators.
But it is still open whether
subnormal (even scalar-valued) Toeplitz operators with finite rank
self-commutators are either normal or analytic.
We would like to state:
\medskip

\begin{con}\label{con4.1}
{\it If $T_\phi$ is a subnormal Toeplitz operator with finite rank
self-commutator, then $T_\phi$ is normal or analytic.}
\end{con}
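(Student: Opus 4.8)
The statement of Conjecture~\ref{con4.1} is an open problem, so what follows is a strategy rather than a complete proof. The natural plan is to reduce it to the \emph{bounded type} case, which is completely settled by the scalar Abrahamse Theorem recalled in Section~2. A subnormal operator is hyponormal, and it is a standard fact that its self-commutator kernel is invariant under it: if $N$ on $\mathcal K\supseteq H^2$ is the minimal normal extension of $T_\phi$, then $x\in\hbox{\rm ker}\,[T_\phi^*,T_\phi]$ precisely when $N^*x\in H^2$, and since $H^2$ is $N$-invariant one gets $N^*(Nx)=N(N^*x)\in H^2$, so $T_\phi x=Nx$ again lies in the kernel. Hence, if one could show that $\phi$ (equivalently, by \cite[Lemma 6]{Ab}, that $\phi$ or $\overline\phi$) is of bounded type, Abrahamse's Theorem would at once force $T_\phi$ to be normal or analytic, with no rationality required. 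The finite-rank hypothesis is exactly what is expected of the conclusion, for by Kronecker's Lemma and (\ref{1.4}) a normal or analytic Toeplitz operator has $[T_\phi^*,T_\phi]=H_{\overline\phi}^*H_{\overline\phi}$ of finite rank precisely when $\phi$ is rational; this is the content of Corollary~\ref{cor3.9} in the case $n=1$, where the coprimeness hypothesis is automatic. Thus the entire conjecture is concentrated in the single implication that $T_\phi$ subnormal with $\hbox{\rm rank}\,[T_\phi^*,T_\phi]<\infty$ forces $\phi$ to be of bounded type.

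To attack this implication I would start from $[T_\phi^*,T_\phi]=H_{\overline{\phi_+}}^*H_{\overline{\phi_+}}-H_{\overline{\phi_-}}^*H_{\overline{\phi_-}}\ge 0$ and try to prove $\hbox{\rm rank}\,H_{\overline{\phi_-}}<\infty$; by Kronecker's Lemma this would make $\overline{\phi_-}$ rational, and then (\ref{1.1}) would give that $\phi$ is of bounded type. Hyponormality by itself yields only the defect inequality $\|H_{\overline{\phi_+}}f\|\ge\|H_{\overline{\phi_-}}f\|$ with finite-dimensional defect, which does not obviously bound $\hbox{\rm rank}\,H_{\overline{\phi_-}}$; so the argument must use subnormality essentially. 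The most promising route is to split off the reducing normal summand of $T_\phi$ and analyze its pure part $S$, which still has finite rank self-commutator, through the scalar spectral measure of $N$. Here one would invoke Yakubovich's theorem \cite{Ya}: when the minimal normal extension of $S$ has no nonzero eigenvectors, $S$ is unitarily equivalent to a block Toeplitz operator with analytic rational (normal matrix) symbol, which is of bounded type, and the goal becomes transporting that structure back to $\phi$.

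The step I expect to be the main obstacle is precisely this transport, together with the eigenvector branch that Yakubovich's hypothesis excludes. Yakubovich's conclusion holds only \emph{up to unitary equivalence}, and by the Observation of Section~2 analyticity of a Toeplitz operator is \emph{not} a unitary invariant; so knowing that $S$ is unitarily equivalent to an analytic block Toeplitz operator does not let one read off that $\phi$ itself is analytic, or even of bounded type. One must therefore either exhibit the bounded-type structure of $\phi$ intrinsically from the finiteness of the self-commutator, or handle separately the case in which the scalar spectral measure of $N$ carries point masses. A sensible first target is the rank-one self-commutator case, where the subnormal structure is most rigid and the mosaic and principal-function machinery is sharpest; a clean treatment there would indicate how to organize a finite-rank induction in general.
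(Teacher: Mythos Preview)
You correctly recognize Conjecture~\ref{con4.1} as open; the paper does not prove it either, but offers partial evidence via Theorem~\ref{thm4.2} and Corollary~\ref{cor4.3}. Your route and the paper's are genuinely different. You aim to show that $\phi$ must be of bounded type (by forcing $\hbox{\rm rank}\,H_{\overline{\phi_-}}<\infty$ and invoking Kronecker, or by transporting Yakubovich's structure through the pure part) and then quote Abrahamse; you correctly flag the obstacle that Yakubovich's conclusion is only up to unitary equivalence, which by the Observation of Section~2 need not return analyticity to $\phi$. The paper instead works \emph{within} the non-bounded-type case: by the Nakazi--Takahashi theorem there is a finite Blaschke product $b\in\mathcal E(\phi)$ with $\deg b=\hbox{\rm rank}\,[T_\phi^*,T_\phi]$ (and $\mathcal E(\phi)=\{b\}$ since $\phi$ is not of bounded type); the paper proves $\hbox{\rm cl}\,H_\phi(\hbox{\rm ker}\,[T_\phi^*,T_\phi])=H^2$, and under the \emph{additional} hypothesis that $b\,\hbox{\rm ker}\,[T_\phi^*,T_\phi]$ is $T_\phi$-invariant, deduces that $b$ is a unimodular constant, whence $T_\phi$ is normal. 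This isolates the whole conjecture in the concrete question of whether that extra invariance is automatic for subnormal $T_\phi$, a purely operator-theoretic target that sidesteps the unitary-equivalence transport problem altogether. Your observation that hyponormality alone cannot yield bounded type is exactly the paper's example $\phi=\overline\psi+z\psi$ with $\psi$ not of bounded type. One small slip: the identity $[T_\phi^*,T_\phi]=H_{\overline\phi}^*H_{\overline\phi}$ holds when $T_\phi$ is analytic, not merely normal (where the self-commutator vanishes).
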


\bigskip

We need not expect that if $T_\phi$ is a
hyponormal Toeplitz operator with finite rank self-commutator then
$\phi$ is of bounded type. Indeed, if $\psi$ is not of bounded type
and $\phi=\overline{\psi}+ z\psi$ (and hence $\phi$ is not of
bounded type) then a straightforward calculation shows that $T_\phi$
is hyponormal and $\hbox{rank}\,[T_\phi^*, T_\phi]=1$.

We would like to take this opportunity to give a positive evidence for
Conjecture \ref{con4.1}. First of all, we recall a theorem of Nakazi and Takahashi
\cite[Theorem 10]{NT} which states that if $T_\phi$ is hyponormal then $[T_\phi^*,
T_\phi]$ is of finite rank if and only if there exists a finite
Blaschke product $b$ in $\mathcal{E}(\phi)$ such that the degree of
$b$ equals the rank of $[T_\phi^*, T_\phi]$.
In what follows we let $b\, \mathcal{M}:=\{
bf: f\in \mathcal M\}$.

\medskip

\begin{thm}\label{thm4.2}
Suppose $T_\phi$
is a hyponormal Toeplitz operator with finite rank self-commutator.
If $\hbox{\rm ker}\,[T_\phi^*, T_\phi]$ and $b\, \hbox{\rm
ker}\,[T_\phi^*, T_\phi]$ (some $b\in\mathcal{E}(\phi)$) are
invariant under $T_\phi$,
then $T_\phi$ is normal or analytic.
\end{thm}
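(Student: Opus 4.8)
The plan is to deduce that $\phi$ is of bounded type and then quote the scalar Abrahamse Theorem. Since $[T_\phi^*,T_\phi]$ has finite rank, the Nakazi--Takahashi theorem \cite{NT} recalled above produces a finite Blaschke product $b\in\mathcal E(\phi)$ with $\deg b=N:=\hbox{rank}\,[T_\phi^*,T_\phi]$; we may assume $T_\phi$ is not normal, so that $N\ge1$ and $b$ is nonconstant. Writing $\phi=\overline{\phi_-}+\phi_+$ and using $\phi-b\overline\phi\in H^\infty$ together with (\ref{1.5-1}), I get $H_{\overline{\phi_-}}=T_{\widetilde b}^*H_{\overline{\phi_+}}$, hence the Cowen-type factorization
\[
[T_\phi^*,T_\phi]=H_{\overline{\phi_+}}^*\bigl(I-T_{\widetilde b}T_{\widetilde b}^*\bigr)H_{\overline{\phi_+}}=H_{\overline{\phi_+}}^*P_{\mathcal H_{\widetilde b}}H_{\overline{\phi_+}}.
\]
Consequently $\mathcal M:=\ker[T_\phi^*,T_\phi]=\{f\in H^2:\ H_{\overline{\phi_+}}f\in\widetilde b\,H^2\}$, a subspace of codimension $N$.

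The key step is to turn the two invariance hypotheses into invariance of $\mathcal M$ under multiplication by $b$. By (\ref{1.5}), $T_\phi T_b-T_bT_\phi=H_{\overline b}^*H_\phi$, while $\ker H_{\overline b}=bH^2$ forces $\hbox{ran}\,H_{\overline b}^*=\mathcal H_b=(bH^2)^\perp$. For $f\in\mathcal M$, the vector $T_\phi(bf)-bT_\phi f=H_{\overline b}^*H_\phi f$ lies in $b\mathcal M\subseteq bH^2$ (because $b\mathcal M$ and $\mathcal M$ are $T_\phi$-invariant) and simultaneously in $\mathcal H_b$; since $\mathcal H_b\cap bH^2=\{0\}$ it vanishes. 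Using $\ker H_{\overline b}^*=\widetilde b\,H^2$ and the identity $H_\phi f=H_{\overline{\phi_+}}(bf)$, the relation $H_{\overline b}^*H_\phi f=0$ is exactly the statement $bf\in\mathcal M$. Thus $b\mathcal M\subseteq\mathcal M$, i.e.\ $\mathcal M$ is invariant under $T_b$. I expect this conversion of the hypothesis ``$b\,\ker[T_\phi^*,T_\phi]$ is invariant'' into $T_b$-invariance of $\mathcal M$ to be the main obstacle.

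Next I would trap $\mathcal M$ from below by a shift-invariant subspace. The annihilator $\mathcal M^\perp$ is finite-dimensional and invariant under $T_b^*=T_{\overline b}$, so it is spanned by generalized eigenvectors of $T_{\overline b}$; these are reproducing kernels at finitely many points of $\mathbb D$ together with their derivatives, all of which lie in a single model space $\mathcal H_u$ with $u$ a finite Blaschke product. Hence $\mathcal M^\perp\subseteq\mathcal H_u$ and therefore $uH^2\subseteq\mathcal M$. Feeding this into the description of $\mathcal M$ and using (\ref{1.5-1}) gives $T_{\widetilde u}^*H_{\overline{\phi_+}}h=H_{\overline{\phi_+}}(uh)\in\widetilde b\,H^2$ for every $h$, so $T_{\widetilde u}^*\bigl(\hbox{cl ran}\,H_{\overline{\phi_+}}\bigr)\subseteq\widetilde b\,H^2$. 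Because $T_{\widetilde u}^*$ is onto, $\hbox{cl ran}\,H_{\overline{\phi_+}}=H^2$ would give $H^2\subseteq\widetilde b\,H^2$, which is impossible since $\widetilde b$ is nonconstant. Therefore $\ker H_{\overline{\phi_+}}^*\ne\{0\}$, and by (\ref{1.1}) the function $\overline{\phi_+}$ — and hence $\overline\phi$, since $H_{\overline\phi}=H_{\overline{\phi_+}}$ — is of bounded type.

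Finally, since $T_\phi$ is hyponormal and $\overline\phi$ is of bounded type, \cite[Lemma 6]{Ab} shows that both $\phi$ and $\overline\phi$ are of bounded type (unless $\phi\in H^\infty$, in which case $T_\phi$ is already analytic). Invoking the hypothesis that $\ker[T_\phi^*,T_\phi]$ is invariant under $T_\phi$, Abrahamse's Theorem then yields that $T_\phi$ is normal or analytic; the degenerate cases $N=0$ or $\phi_-=0$ fall directly into the normal, respectively analytic, alternative.
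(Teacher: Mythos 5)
Your argument is essentially correct, but its second half follows a genuinely different route from the paper's, and there is one loose end to tie up. The shared core: both proofs invoke Nakazi--Takahashi, factor $[T_\phi^*,T_\phi]=H_{\overline{\phi_+}}^*P_{\mathcal H_{\widetilde b}}H_{\overline{\phi_+}}$, and use $T_\phi T_b-T_bT_\phi=H_{\overline b}^*H_\phi$ together with $bH^2\cap\mathcal H_b=\{0\}$ to convert the invariance of $b\,\ker[T_\phi^*,T_\phi]$ into the vanishing of $H_{\overline b}^*H_\phi$ on the kernel. From there the paper splits on whether $\phi$ is of bounded type: in the non-bounded-type case it shows $\hbox{cl}\,H_\phi(\ker[T_\phi^*,T_\phi])=H^2$ (using that $H_{\overline\phi}$ is then injective with dense range and that $\dim\hbox{ran}\,[T_\phi^*,T_\phi]=\deg b$), whence $H_{\overline{\widetilde b}}H^2=\{0\}$, forcing $b$ to be a unimodular constant and $T_\phi$ normal. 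You instead read the vanishing as $T_b$-invariance of $\mathcal M=\ker[T_\phi^*,T_\phi]$, use the Jordan structure of $T_{\overline b}$ on the $N$-dimensional space $\mathcal M^\perp$ (each generalized eigenspace sits in $\ker T_{\overline{(b-\overline\mu)^k}}=\mathcal H_{B_\mu^k}$ with $B_\mu$ a finite Blaschke product) to trap $\mathcal M^\perp$ in a model space $\mathcal H_u$, deduce $uH^2\subseteq\mathcal M$, and conclude via surjectivity of $T_{\widetilde u}^*$ that $\hbox{cl ran}\,H_{\overline{\phi_+}}\ne H^2$, so the symbol is of bounded type and Abrahamse finishes. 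Both are valid; yours shows the non-bounded-type case simply cannot occur when $b$ is nonconstant, at the cost of the extra model-space lemma, while the paper's density argument is shorter.

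The loose end: the hypothesis only guarantees that $b'\,\ker[T_\phi^*,T_\phi]$ is invariant for \emph{some} $b'\in\mathcal E(\phi)$, whereas you apply the invariance to the Nakazi--Takahashi Blaschke product $b$. These need not coincide unless $\mathcal E(\phi)$ is a singleton. The fix is the one-line observation the paper makes: if $k_1\ne k_2$ both lie in $\mathcal E(\phi)$, then $(k_1-k_2)\overline\phi\in H^\infty$, so $0\ne k_1-k_2\in\ker H_{\overline{\phi_+}}=\ker H_{\overline\phi}$ and $\overline\phi$ is already of bounded type by (\ref{1.1}), in which case Abrahamse applies directly and none of the rest is needed. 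Insert that remark before your key step and the proof is complete.
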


\medskip

\begin{proof}
Write $K:=\text{ker}\,[T_\phi^*, T_\phi]$ and
$R:=\text{ran}\,[T_\phi^*, T_\phi]$.
If
$\phi$ or $\overline{\phi}$ is of bounded type then by
Abrahamse's Theorem, $T_\phi$ is either normal or analytic. Suppose
both $\phi$ and $\overline\phi$ are not of bounded type.
We first claim that
\begin{equation}\label{4.1}
\hbox{\rm cl}\,H_\phi(\hbox{\rm ker}\,[T_\phi^*, T_\phi])=H^2.
\end{equation}
To see this we observe that by the Nakazi-Takahashi Theorem,
there exists a finite Blaschke product $b\in\mathcal{E}(\phi)$ such
that $\text{deg}\,(b)=\text{dim}\, R$.
Since
$$
T_\phi^* T_\phi-T_\phi T_\phi^*
=H_{\overline\phi}^*H_{\overline\phi}-H_{\phi}^*H_{\phi}
=H_{\overline\phi}^*H_{\overline\phi}-H_{b\overline\phi}^*H_{b\overline\phi}
=H_{\overline\phi}^*H_{\overline
b}H_{\overline b}^* H_{\overline\phi},
$$
we have
\begin{equation*}
\text{ker}\,[T_\phi^*, T_\phi]=\text{ker}\, H_{\overline b}^* H_{\overline\phi}
=\text{ker}\,(T_{\overline\phi}T_b-T_bT_{\overline\phi}),
\end{equation*}
which shows that $H_{\overline{\widetilde
b}}H_{\overline\phi}(K)=0$, and hence $H_{\overline\phi}(K)\subseteq
\widetilde{b}H^2$, so that $\text{cl}\, H_{\overline\phi}K\subseteq
\widetilde{b} H^2$. But since $\text{dim}\,R<\infty$ and by (\ref{1.1}),
$H_{\overline\phi}$ is one-one and has dense range, we have
$$
H^2=\text{cl}H_{\overline\phi}(K+R)=\text{cl}\,(H_{\overline\phi}K+H_{\overline\phi}R)=\text{cl}\,
H_{\overline\phi}K +H_{\overline\phi} R.
$$
We therefore have $\text{cl}\,H_{\overline\phi}K =\widetilde{b}
H^2$ since $\text{dim}\, H_{\overline\phi} R =\text{deg}(b)$. Hence
$$
\text{cl}\,H_\phi K=\text{cl}\, H_{b\overline\phi} K=
\text{cl}T_{\widetilde{b}}^*H_{\overline\phi}K =
T_{\overline{\widetilde{b}}}\widetilde{b}H^2=H^2,
$$
which proves (\ref{4.1}).
On the other hand, we note that $\mathcal{E}(\phi)$ is a singleton set:
otherwise, $\phi$ is of bounded type. Thus
$\mathcal{E}(\phi)$ consists of only a finite Blaschke product $b$.
We next argue that if $T_\phi(bK)\subseteq bH^2$ then
\begin{equation}\label{4.3}
T_\phi(bk)=b\, T_\phi k\quad\text{for each}\ k\in K.
\end{equation}
To see this, let $k\in K$ and write $k_1:=T_\phi k$. Thus $\phi k =
k_1+ \overline{k}_2$ for some $k_2 \in H_0^2=zH^2$. Then
$$
T_\phi(bk)=P(b\phi k)=P(b\overline{k}_2+b k_1)=
P(b\overline{k}_2)+ bk_1=P(b\overline{k}_2)+b\,T_\phi k.
$$
Since, by assumption, $T_\phi(bK)\subseteq bH^2$, it follows
that $P(b\overline{k}_2)\in bH^2$.
But since
$P(b\overline{k}_2)\in (bH^2)^\perp$,
we have
$P(b\overline{k}_2)=0$, which proves (\ref{4.3}). Since $T_\phi T_b-T_b
T_\phi=H_{\overline{\widetilde b}} H_\phi$
it follows from (\ref{4.1}) and (\ref{4.3}) that
$$
H_{\overline{\widetilde b}} H^2=H_{\overline{\widetilde b}}
(\text{cl}\, H_\phi K) =\text{cl}\, H_{\overline{\widetilde b}}
H_\phi K = \text{cl}\,(T_{\phi} T_{b}-T_b T_\phi)K=0,
$$
which implies that $\overline{\widetilde b} H^2\subseteq  H^2$, so
that $b=e^{i\theta}$ for some $\theta\in [0,2\pi)$. Therefore $\phi$
is of the form $\phi=\overline f+e^{i\theta} f$ for some $f\in
H^\infty$ and $\theta\in [0, 2\pi)$ which implies that $T_\phi$ is
normal.
\end{proof}

\bigskip

We  thus have:
\smallskip

\begin{cor}\label{cor4.3}
Suppose $T_\phi$
is a subnormal Toeplitz operator with finite rank self-commutator.
If $b\, \hbox{\rm ker}\,[T_\phi^*, T_\phi]$  is
invariant under $T_\phi$ (some $b\in\mathcal{E}(\phi)$), then $T_\phi$ is normal or analytic.
\end{cor}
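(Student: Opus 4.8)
The plan is to obtain Corollary \ref{cor4.3} as an immediate consequence of Theorem \ref{thm4.2}. The two statements differ only in their hypotheses: Theorem \ref{thm4.2} assumes $T_\phi$ is hyponormal and that \emph{both} $\hbox{\rm ker}\,[T_\phi^*,T_\phi]$ and $b\,\hbox{\rm ker}\,[T_\phi^*,T_\phi]$ are invariant under $T_\phi$, whereas the corollary strengthens hyponormality to subnormality but only assumes invariance of the second space. Since subnormality implies hyponormality, and since a subnormal operator with finite rank self-commutator still falls under the Nakazi--Takahashi theorem (so the requisite finite Blaschke product $b\in\mathcal{E}(\phi)$ exists), the entire task reduces to verifying one missing ingredient: that for a subnormal $T_\phi$ the space $\hbox{\rm ker}\,[T_\phi^*,T_\phi]$ is \emph{automatically} invariant under $T_\phi$.

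To establish this, I would prove the general fact that $\hbox{\rm ker}\,[T^*,T]$ is invariant under $T$ for \emph{every} subnormal operator $T$. Let $N$ be a normal extension of $T$ on $\mathcal{K}\supseteq\mathcal{H}$. Since $\mathcal{H}$ is invariant under $N$, the block decomposition relative to $\mathcal{K}=\mathcal{H}\oplus\mathcal{H}^\perp$ has the form
$$
N=\begin{bmatrix} T & X\\ 0 & Y\end{bmatrix}.
$$
Expanding the normality identity $N^*N=NN^*$ and comparing the $(1,1)$- and $(2,1)$-entries yields
$$
[T^*,T]=XX^*\qquad\hbox{and}\qquad X^*T=YX^*.
$$
The first identity gives $\hbox{\rm ker}\,[T^*,T]=\hbox{\rm ker}\,XX^*=\hbox{\rm ker}\,X^*$. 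Then, for any $f\in\hbox{\rm ker}\,X^*$, the intertwining relation produces $X^*(Tf)=(X^*T)f=(YX^*)f=Y(X^*f)=0$, so $Tf\in\hbox{\rm ker}\,X^*=\hbox{\rm ker}\,[T^*,T]$. Hence $\hbox{\rm ker}\,[T^*,T]$ is invariant under $T$.

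Applying this observation to $T=T_\phi$ supplies precisely the first invariance hypothesis of Theorem \ref{thm4.2}, while the second is assumed outright. Theorem \ref{thm4.2} therefore applies and concludes that $T_\phi$ is normal or analytic. The only computational content is the elementary block-matrix verification of the identities $[T^*,T]=XX^*$ and $X^*T=YX^*$; the genuine---though modest---insight is recognizing that subnormality, as opposed to mere hyponormality, is exactly what upgrades Theorem \ref{thm4.2} by furnishing the extra invariance for free.
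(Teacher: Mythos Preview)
Your proposal is correct and follows exactly the paper's approach: the paper's proof simply invokes the general fact that $\hbox{ker}\,[T^*,T]$ is invariant under $T$ for every subnormal operator $T$ and then applies Theorem~\ref{thm4.2}. You have additionally supplied the standard block-matrix verification of that fact, which the paper takes for granted.
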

\medskip

\begin{proof}
Since $\hbox{ker}\,[T^*,T]$ is invariant under $T$ for
every subnormal operator $T$, the result follows
at once from Theorem \ref{thm4.2}.
\end{proof}
\medskip

We were not unable to decide whether the condition ``$b\, \hbox{\rm
ker}\,[T_\phi^*, T_\phi]$ (some $b\in\mathcal{E}(\phi)$) is
invariant under $T_\phi$" can be dropped from Corollary \ref{cor4.3}:
in other words, if $T_\phi$ is a subnormal operator with finite
rank self-commutator
and $b\in\mathcal{E}(\phi)$,
is $b\, \hbox{\rm ker}\,[T_\phi^*, T_\phi]$ invariant under $T_\phi$\,?
If the answer to this question is affirmative we can conclude that
Conjecture \ref{con4.1} is true.

\bigskip

%
%


\bigskip

\noindent
Department of Mathematics, University of Iowa, Iowa City, IA 52242, U.S.A.\\
E-mail: raul-curto@uiowa.edu

\bigskip

\noindent
Department of Mathematics, Sungkyunkwan University, Suwon 440-746, Korea\\
E-mail: ihwang@skku.edu

\bigskip

\noindent
Department of Mathematics, Seoul National University, Seoul 151-742, Korea\\
E-mail: wylee@snu.ac.kr

\end{document}